\newtheorem{theorem}{Theorem}[section]
\newtheorem*{theorem*}{Theorem}
\newtheorem{corollary}[theorem]{Corollary}
\newtheorem*{corollary*}{Corollary}
\newtheorem{lemma}[theorem]{Lemma} 
\newtheorem*{lemma*}{Lemma} 
\newtheorem*{proposition*}{Proposition} 
\newtheorem{assumption}[theorem]{Assumption} 
\newtheorem*{assumption*}{Assumption} 
\newtheorem{remark}[theorem]{Remark} 
\theoremstyle{definition} 
\theoremstyle{definition} 
\newtheorem*{definition*}{Definition} 
\title{Inverses of SBP-SAT finite difference operators approximating the first and second derivative}
\begin{document}
\thispagestyle{plain}

\author{Sofia Eriksson\footnote{Department of Mathematics, 
Linnaeus University, 
V\"{a}xj\"{o}, Sweden. Email: sofia.eriksson@lnu.se}}

\date{}

\maketitle


\newcount\tmpnum \newdimen\tmpdim
{\lccode`\?=`\p \lccode`\!=`\t\lowercase{\gdef\ignorept#1?!{#1}}}
\edef\widecharS{\expandafter\ignorept\the\fontdimen1\textfont1}

\def\widebar#1{\futurelet\next\widebarA#1\widebarA}
\def\widebarA#1\widebarA{%
 \def\tmp{0}\ifcat\noexpand\next A\def\tmp{1}\fi
 \widebarE
 \ifdim\tmp pt=0pt \overline{#1}%
 \else {\mathpalette\widebarB{#1}}\fi
}

\def\widebarB#1#2{%
 \setbox0=\hbox{$#1\overline{#2}$}%
 \tmpdim=\tmp\ht0 \advance\tmpdim by-.4pt
 \tmpdim=\widecharS\tmpdim
 \kern\tmpdim\overline{\kern-\tmpdim#2}%
}
\def\widebarC#1#2 {\ifx#1\end \else 
 \ifx#1\next\def\tmp{#2}\widebarD 
 \else\expandafter\expandafter\expandafter\widebarC
 \fi\fi
}
\def\widebarD#1\end. {\fi\fi}
\def\widebarE{\widebarC A1.4 J1.2 L.6 O.8 T.5 U.7 V.3 W.1 Y.2 
 a.5 b.2 d1.1 h.5 i.5 k.5 l.3 m.4 n.4 o.6 p.4 r.5 t.4 v.7 w.7 x.8 y.8
 \alpha1 \beta1 \gamma.6 \delta.8 \epsilon.8 \varepsilon.8 \zeta.6 \eta.4
 \theta.8 \vartheta.8 \iota.5 \kappa.8 \lambda.5 \mu1 \nu.5 \xi.7 \pi.6
 \varpi.9 \rho1 \varrho1 \sigma.7 \varsigma.7 \tau.6 \upsilon.7 \phi1
 \varphi.6 \chi.7 \psi1 \omega.5 \cal1 \end. }

\def\test#1{$\let\.=#1 \.M, \.A, \.g, \.\beta, \.{\cal A}^q, \.{AB}^\sigma, 
\.H^C, \.{\sin z}, \.W_{\!n}$}


\newcommand{\dx}{\,\mathrm {d} x}
\newcommand{\ddt}{\frac{\mathrm {d} }{\mathrm {d} t}}
\newcommand{\trans}{\mathsf{T}}

\newcommand{\indexl}{\text{\tiny L}}
\newcommand{\indexi}{\text{\tiny I}}
\newcommand{\indexr}{\text{\tiny R}}
\newcommand{\indexlr}{\text{\tiny L,R}}
\newcommand{\indexrl}{\text{\tiny R,L}}

\newcommand{\indexc}{\text{\tiny C}}
\newcommand{\indexd}{\text{\tiny D}}
\newcommand{\indext}{\text{\tiny T}}

\newcommand{\domsize}{\ell}
\newcommand{\scu}{u}
\newcommand{\force}{f}
\newcommand{\initial}{u_0}

\newcommand{\gl}{g_\indexl}
\newcommand{\gr}{g_\indexr }
\newcommand{\glr}{g_\indexlr }

\newcommand{\ngpts}{n}
\newcommand{\scv}{\mathbf{v}}
\newcommand{\fh}{\mathbf{\force}}

\newcommand{\scvc}{v}

\newcommand{\el}{\mathbf{e}_\indexl}
\newcommand{\er}{\mathbf{e}_\indexr }
\newcommand{\elr}{\mathbf{e}_\indexlr}
\newcommand{\erl}{\mathbf{e}_\indexrl}

\newcommand{\dsel}{\mathbf{d}_\indexl}
\newcommand{\dser}{\mathbf{d}_\indexr }
\newcommand{\dselr}{\mathbf{d}_\indexlr}
\newcommand{\dserl}{\mathbf{d}_\indexrl}

\newcommand{\PH}{H}

\newcommand{\lindisc}{L}
\newcommand{\genOPsnok}{K}
\newcommand{\GreenDiscGen}{G}

\newcommand{\al}{\alpha_\indexl}
\newcommand{\ar}{\alpha_\indexr}
\newcommand{\bl}{\beta_\indexl}
\newcommand{\br}{\beta_\indexr}
\newcommand{\alr}{\alpha_\indexlr}
\newcommand{\blr}{\beta_\indexlr}

\newcommand{\penI}{\sigma}
\newcommand{\penS}{\tau}

\newcommand{\taul}{\penI_\indexl}
\newcommand{\taur}{\penI_\indexr}
\newcommand{\taulr}{\penI_\indexlr}
\newcommand{\sigmal}{\penS_\indexl}
\newcommand{\sigmar}{\penS_\indexr}
\newcommand{\sigmalr}{\penS_\indexlr}

 \newcommand{\dual}{\delta}
 \newcommand{\duall}{\dual_\indexl}
 \newcommand{\dualr}{\dual_\indexr}
 \newcommand{\duallr}{\dual_\indexlr}

\newcommand{\ett}{\mathbf{1}}
\newcommand{\xfet}{\mathbf{x}}
\newcommand{\one}{\vec{1}}
\newcommand{\xvec}{\vec{x}}

 \newcommand{\noll}{\mathbf{0}}
\newcommand{\Imid}{\bar{I}}

\newcommand{\pickup}{{\color{oklar}\mathbf{b}}}
\newcommand{\pickupl}{\mathbf{b}_\indexl}
\newcommand{\pickupr}{\mathbf{b}_\indexr}
\newcommand{\pickuplr}{\mathbf{b}_\indexlr}

\newcommand{\Abeg}{a_\indexl}
\newcommand{\Aend}{a_\indexr }
\newcommand{\Aoff}{a_\indexc}
\newcommand{\Amid}{\bar{A}}
\newcommand{\Awest}{\vec{a}_\indexl}
\newcommand{\Aeast}{\vec{a}_\indexr }
\newcommand{\Avecs}{\vec{a}_\indexlr}

\newcommand{\qhattot}{\xi_\indext}
\newcommand{\qhatc}{\xi_\indexc}
\newcommand{\qhatl}{\xi_\indexl}
\newcommand{\qhatr}{\xi_\indexr}
\newcommand{\qhatlr}{\xi_\indexlr}
\newcommand{\qhatd}{\xi_\indexd}
\newcommand{\qhatall}{\xi_{\indexlr,\indexc}}

\newcommand{\godtycklig}{\zeta}
 \newcommand{\detmatris}{\Sigma}
 
 \newcommand{\godtyckligare}{\varepsilon}
 \newcommand{\tmpvar}{\Gamma}

\newcommand{\app}{\gamma}
\newcommand{\Atilde}{\tilde{A}_\gamma}

\newcommand{\ql}{q_\indexl}
\newcommand{\qr}{q_\indexr}
\newcommand{\qlr}{q_\indexlr}
\newcommand{\qc}{q_\indexc}
\newcommand{\qtot}{q_\indext}
\newcommand{\qall}{q_{\indexlr,\indexc}}

\newcommand{\qsnokl}{\widetilde{q}_\indexl}
\newcommand{\qsnokr}{\widetilde{q}_\indexr}
\newcommand{\qsnoklr}{\widetilde{q}_\indexlr}
\newcommand{\qsnokc}{\widetilde{q}_\indexc}
\newcommand{\qsnoktot}{\widetilde{q}_\indext}
\newcommand{\qsnokall}{\widetilde{q}_{\indexlr,\indexc}}

\newcommand{\helpvar}{\mathbf{w}}
\newcommand{\Msnok}{\widetilde{M}}
\newcommand{\pert}{p}
 \newcommand{\arbscall}{\rho_\indexl}
 \newcommand{\arbscalr}{\rho_\indexr}
 \newcommand{\arbscallr}{\rho_\indexlr}

\newcommand{\mainl}{s_\indexl}
\newcommand{\mainr}{s_\indexr}
\newcommand{\mainlr}{s_\indexlr}

\newcommand{\minorl}{t_\indexl}
\newcommand{\minorr}{t_\indexr}
\newcommand{\minorlr}{t_\indexlr}

\newcommand{\rl}{r_\indexl}
\newcommand{\rr}{r_\indexr}
\newcommand{\rlr}{r_\indexlr}

\newcommand{\intvar}{y}

\newcommand{\Opinv}{\vec{g}}
\newcommand{\opinv}{g}
\newcommand{\theR}{\psi}
\newcommand{\Gmid}{\bar{G}}

\newcommand{\evec}{\vec{e}}
 \newcommand{\coeffs}{c}

 \newcommand{\cc}[1]{\coeffs_{1}}
\newcommand{\ck}[1]{\coeffs_{2}}
\newcommand{\ct}[1]{\coeffs_{3}}
 \newcommand{\cf}[1]{ \coeffs_{4}}

 \newcommand{\ucc}[1]{ \coeffs^u_{1}}
\newcommand{\uck}[1]{ \coeffs^u_{2}}
\newcommand{\uct}[1]{ \coeffs^u_{3}}
 \newcommand{\ucf}[1]{ \coeffs^u_{4}}
 
 \newcommand{\lcc}[1]{ \coeffs^l_{1}}
\newcommand{\lck}[1]{ \coeffs^l_{2}}
\newcommand{\lct}[1]{ \coeffs^l_{3}}
 \newcommand{\lcf}[1]{ \coeffs^l_{4}}
 
 \newcommand{\call}{ \coeffs_{1,2,3,4}}

 \newcommand{\callu}{{\coeffs^u_{1,2,3,4}}}
 \newcommand{\calll}{{\coeffs^l_{1,2,3,4}}}

 \newcommand{\nydetskalning}{\mathcal{Q}}
 \newcommand{\nyserie}{\mathcal{P}}
 \newcommand{\corrgreen}{\kappa}


\newcommand{\Fsnok}{\widetilde{\mathbf \force}}

\newcommand{\Qsnok}{\widetilde{Q}}
\newcommand{\OPsnok}{\widetilde{A}}


\newcommand{\qvec}{\vec{q}}
\newcommand{\fundis}{G_1}
\newcommand{\cfet}{\mathbf{b}}
\newcommand{\Qbar}{\widebar{Q}}
\newcommand{\Ibar}{\widebar{I}}


\newcommand{\GreenDisc}{G_2}

\newcommand{\preco}{P}

\newcommand{\Qinvmid}{\widebar{G}}
\newcommand{\Qinvvec}{\vec{g}}
\newcommand{\Qinvele}{g}
\newcommand{\rtre}{\phi}

 \newcommand{\nyttplus}{\mathcal{D}}
 \newcommand{\modif}{\mathcal{B}}
 
 \newcommand{\rotursym}{\mathcal{A}}
 \newcommand{\storserien}{\mathcal{C}}
 \newcommand{\grund}{\nu}

\newcommand{\zl}{z_\indexl}
\newcommand{\zr}{z_\indexr}
\newcommand{\zlr}{z_\indexlr}
\newcommand{\zc}{z_\indexc}

\newcommand{\xl}{x_\indexl}
\newcommand{\xr}{x_\indexr}
\newcommand{\xlr}{x_\indexlr}

\newcommand{\yl}{y_\indexl}
\newcommand{\yr}{y_\indexr}
\newcommand{\ylr}{y_\indexlr}

\begin{abstract}

The scalar, one-dimensional advection equation and heat equation are considered. These equations are discretized in space, using a finite difference method satisfying summation-by-parts (SBP) properties. To impose the boundary conditions, we use a penalty method called simultaneous approximation term (SAT). Together, this gives rise to two semi-discrete schemes where the discretization matrices approximate the first and the second derivative operators, respectively. The discretization matrices depend on free parameters from the SAT treatment. 
 
We derive the inverses of the discretization matrices, interpreting them as discrete Green's functions. In this direct way, we also find out precisely which choices of SAT parameters that make the discretization matrices singular. 
In the second derivative case, it is shown that if the penalty parameters are chosen such that the semi-discrete scheme is dual consistent, the discretization matrix can become singular even when the scheme is energy stable.

The inverse formulas hold for SBP-SAT operators of arbitrary order of accuracy. For second and fourth order accurate 
operators, the inverses are provided explicitly.

\end{abstract}
\noindent
{\bf Keywords:} 
Finite differences,
summation by parts,
simultaneous approximation term,
 discretization matrix inverses,
discrete fundamental solutions,
discrete Green's functions

\section{Introduction}

Consider the time-dependent partial differential equation \eqref{pdegen} below, where $\mathcal{L}$ represents a linear differential operator and $\force(x)$ is a forcing function. We assume that some suitable initial condition and 
-- for the moment homogeneous -- boundary conditions are given such that we have a well-posed problem. Applying the method of lines, that is discretizing first in space while keeping time continuous, yields a system of ordinary differential equations \eqref{discgen}, where we refer to $\lindisc$ as the {\it discretizarion matrix}.
\begin{subequations}
\begin{align}
\label{pdegen}
\scu_t+\mathcal{L}\scu&=\force,&&t\geq0,\quad x\in[0,\domsize],\\
\label{discgen}
\scv_t+\lindisc\scv&=\fh,&& t\geq0.
\end{align}
\end{subequations}
We first look at the scalar advection equation and thereafter at the heat equation, both in one spatial dimension. Thus $\lindisc$ approximates either the first or the second derivative operator, including boundary treatments.

In this paper, $\lindisc$ is obtained using the SBP-SAT finite difference method. This
 class of finite difference 
method 
is based on difference operators fulfilling
 summation-by-parts (SBP) properties, and is modified by the penalty technique simultaneous approximation term (SAT) for treating 
 the boundary conditions. The SBP operators were first developed for first derivatives \cite{ref:KREI74,ref:STRA94} and then later for second derivatives \cite{Carpenter1999341,{Mattsson2004503}} and are designed to facilitate the derivation of energy estimates. A means to impose boundary conditions without destroying these properties is to use SAT \cite{ref:CARP94}. 
The SATs included in $\lindisc$ contain free parameters. 
 We follow the common practice of determining these parameters using the energy method, such that \eqref{discgen} is guaranteed to be time-stable. 
 Thereafter, any remaining degrees of freedom in the SATs can be used to make the scheme {\it dual consistent}. 
 Dual consistency is advantageous when computing functionals of the solution, since the order of accuracy of functionals from dual consistent schemes can be higher compared to those from non-dual consistent schemes \cite{HickenNo17}. 
For more details about SBP-SAT, see \cite{Magnus201417,DelReyFernandez2014171}.

Thanks to the SBP-SAT properties, the discretization matrix can be factorized as $\lindisc=\PH^{-1}\genOPsnok$, where $\PH$ is a symmetric, positive definite matrix that has the role of a quadrature rule, see \cite{Hicken2013111}. Now consider the steady version of \eqref{pdegen}, $\mathcal{L}\scu=\force$. Its solution $ \scu(x)$ may be represented as in \eqref{pdegreen} below, where $\mathcal{G}$ is the Green's function. The steady version of \eqref{discgen} is $\lindisc\scv=\fh$. Solving for $\scv$, yields 
\eqref{discgreen}.
\begin{subequations}
 \begin{align}\label{pdegreen}
 \scu(x)&=\int_0^{ \domsize}\mathcal{G}(x,\intvar)\force(\intvar)\,\mathrm {d} \intvar,
\\\label{discgreen}
\scv&=\genOPsnok^{-1}\PH\fh.
 \end{align}
\end{subequations}
With 
$\PH$'s role as a quadrature rule in mind,
we can see a clear similarity between \eqref{pdegreen} and \eqref{discgreen}, and
realize that
$\genOPsnok^{-1}$ 
resembles
the Green's function. It makes sense to refer to $\genOPsnok^{-1}$ as a {\it discrete Green's function}.

A finite difference analogue of the Green's function was introduced already 
in the fundamental article \cite{Courant1928}. 
Thereafter, discrete Green's functions 
appear sporadically 
 in the
 literature, see for example \cite{DeeterSpringer1965,CHUNG2000191} and references therein. 
E.g. in \cite{Beyn1982} (and correspondingly in \cite{Courant1928} for two-dimensional problems) the finite formula approximating \eqref{pdegreen} is scaled with the spatial mesh size $h$, which then corresponds closely to \eqref{discgreen}.
 However, since traditional 
 finite difference stencils usually do not have an assigned quadrature rule in the same sense 
 as the SBP operators, the term "discrete Green's functions" often 
 refers to $\lindisc^{-1}$ rather than to $\genOPsnok^{-1}$, for example in \cite{CHUNG2000191,Stetter1968,Branden2007}.

In the above-mentioned articles, the standard way of enforcing boundary conditions, 
{\it injection}, has been used instead of SAT (for descriptions of these two boundary methods, 
see for example \cite{Magnus201417}).
In \cite{Eriksson20092659}, the first and second derivatives were approximated using 
an 
SBP-SAT finite volume method, 
 the inverses analogous to $\genOPsnok^{-1}$
 were derived and
 used for analysing errors. Here, 
we 
derive 
formulas for 
$\genOPsnok^{-1}$ 
corresponding to
 the first and second derivatives 
 as well, however,
as an extension to the results in \cite{Eriksson20092659}, our formulas hold for
 arbitrary orders of accuracy and in the second derivative case 
we consider
 general Robin boundary conditions instead of only Dirichlet boundary conditions.

The inverses are full matrices and are therefore 
probably not 
competitive for solving 
systems $\lindisc\scv=\fh$ directly,
compared to fast solvers 
for
banded matrices. It is however often advisable to 
use pre-conditioning 
to improve the convergence of iterative methods, 
\cite{GroteHuckle1997}. A 
preconditioning matrix $\preco$ should ideally 
approximate the inverse of $\lindisc$
in some sense,
and 
knowledge about the 
structure of the inverses could --
speculatively -- be used 
when designing preconditioning matrices.
If $\preco$ is
 a sparse approximate inverse, 
 the computations are cheap, but
preconditioners $\preco$ may also be essentially dense matrices, 
as for example the fundamental solution preconditioners considered in \cite{Branden2007}.

The paper is organized as follows: In Section~\ref{FirstDeriv}, we look at the semi-discrete scheme approximating the advection equation. The matrix $\genOPsnok$ associated with $\frac{\partial}{\partial x}$ is denoted $\Qsnok$, and its inverse is presented in Theorem~\ref{ThmGenInv1}. In Section~\ref{SecondDeriv}, we consider the heat equation, 
 thus approximating $\frac{\partial^2}{\partial x^2}$.
The related matrix $\genOPsnok$, denoted $\OPsnok$, is inverted in Theorem~\ref{ThmGenInv}. 
The SAT parameters are chosen to give stability and dual consistency, and 
additionally
it is of interest to know if some choices of SAT parameters 
result in a singular discretization matrix $\lindisc$.
In the second derivative case, it turns out that an energy stable scheme can actually have a singular $\lindisc$ if the scheme is also dual consistent. Some relations between stability, dual consistency and a singular discretization matrix is discussed in Section~\ref{RelStabSingDual}. We also discuss the relations between two different ways of showing energy stability, in Section~\ref{relationsErikssonDual}. The paper is summarized in Section~\ref{Summary}.

\section{The first derivative}
\label{FirstDeriv}

Consider the scalar advection equation with a Dirichlet boundary condition at the inflow boundary, that is 
\begin{align}
\label{contPDE1}
\begin{array}{rll}
\scu_t+
\scu_{x}=&\hspace{-7pt}\force,\hspace{20pt}&x\in[0,\domsize],\hspace{20pt}\vspace{4pt}\\ \scu=&\hspace{-7pt}\gl,&x= 0,
\end{array}
\end{align}
valid for $t\geq0$, with initial condition $\scu(x,0)=\initial(x)$. The forcing function $\force(x,t)$, the initial data $\initial(x)$ and the boundary data $\gl(t)$ are 
known functions.


We call \eqref{contPDE1} well-posed if it has a unique solution and is stable (can be bounded by data). 
Techniques for showing existence and uniqueness can be found in for example \cite{KL,GKO}. We focus on showing
stability, since we will derive a corresponding stable discrete problem later.
We use the energy method, and multiply the 
partial differential equation in \eqref{contPDE1} by $\scu$, and integrate over the spatial domain. Thereafter, we use integration by parts and apply the boundary condition.
For simplicity, we consider the homogeneous case, that is with the data $\force=0$ and $\gl=0$. 
This yields
\begin{align*}
\ddt\|\scu\|^2
=-\scu(\domsize,t)^2
\end{align*}
where $\|\scu\|^2=\int_0^{\domsize}\scu^2\dx$ and where we have used that $(\scu^2)_t=2\scu\scu_t$.
In the homogeneous case, the growth rate thus becomes $\ddt\|\scu\|^2\leq0$. Integrating this in time yields 
the energy
estimate $\|\scu\|^2\leq\|\initial\|^2$ and the 
solution is thus bounded. 
Since \eqref{contPDE1} is an one-dimensional hyperbolic
 problem it is also possible to show strong well-posedness, i.e., that $\|\scu\|$ is bounded by the data $\force$, $\gl$ and $\initial$. See \cite{KL,GKO} for different definitions of well-posedness.

\subsection{The semi-discrete scheme}
\label{Semi1}

We first discretize in space, on the interval $x\in[0, \domsize]$, using $\ngpts +1$ equidistant grid points $x_i=ih$, where $h= \domsize/\ngpts $ and $i=0,1,\hdots,\ngpts $.
Using the SBP-SAT finite difference method, 
we obtain a semi-discrete scheme approximating \eqref{contPDE1} as
\begin{align}
\label{discSCHEME1}
\begin{split}
\scv_t+D_1\scv=\fh &+\PH ^{-1} \taul \el\left( \el^\trans\scv-\gl\right),
\end{split}
\end{align}
where $\scv(t)=[\scvc_0, \scvc_1, \hdots, \scvc_\ngpts ]^\trans$ is the approximation of the continuous solution $\scu(x,t)$, and
where 
$\fh=[\force(x_0,t), \force(x_1,t), \hdots, \force(x_\ngpts ,t)]^\trans$ is the restriction of $\force(x,t) $ to the grid. 
In the same way, we let the initial data be 
$\scv(0)=[\initial(x_0), \initial(x_1), \hdots, \initial(x_\ngpts )]^\trans$. 
The matrix $D_1$ approximates the first derivative operator $\partial/\partial x$, 
and fulfills the SBP-properties \cite{ref:KREI74,ref:STRA94}
\begin{align}\label{SBPprop1}
D_1=\PH ^{-1}Q,&&\PH=\PH^\trans>0,&&Q+Q^\trans=\er\er^\trans-\el\el^\trans
\end{align}
where 
$\el=[1, 0, \hdots, 0]^\trans$ and $\er=[ 0, \hdots, 0, 1]^\trans$.
By the notation $>$, we mean that the matrix $\PH$ is 
positive definite. 
As mentioned in the introduction,
 $\PH$ has the role of a quadrature rule and 
 $\|\scv\|_{\PH}^2\equiv\scv^\trans\PH\scv$
approximates the $L^2$-norm of $\scu(x,t)$, see \cite{Hicken2013111}. 
The scalar $\taul$ determines the strength of the SAT, 
and will be chosen below 
such that the scheme \eqref{discSCHEME1} is energy stable and dual consistent.

\subsubsection{Stability and dual consistency}

To show energy stability, we multiply \eqref{discSCHEME1} by $\scv^\trans\PH$ from the left and use the relations \eqref{SBPprop1}.
We thereafter add the transpose, 
and 
 we consider $\fh=0$ and $\gl=0$, just as in the continous case. This yields 
\begin{align*}
\ddt\|\scv\|^2_\PH=-\scvc_\ngpts ^2+(1+2 \taul) \scvc_0^2,
\end{align*}
where
$\scvc_0=\el^\trans\scv$ and $\scvc_\ngpts =\er^\trans\scv$. 
We need $\ddt\|\scv\|^2_\PH\leq0$, which is guaranteed if $ \taul\leq-1/2$.
For a dual consistent scheme, we need $\taul=-1$, see \cite{HickenNo17,Berg20126846}.

\subsection{The inverse of the discretization matrix}

We first rewrite \eqref{discSCHEME1} as
\begin{align}\label{SchemeShort1}
\scv_t+
\PH^{-1}\Qsnok\scv&=\Fsnok,
\end{align}
where 
\begin{align}\label{Qsnok}
\Qsnok=Q- \taul \el \el^\trans,&&\Fsnok=\fh -\PH ^{-1} \taul \el\gl.
\end{align}
We identify $\Qsnok$ as the first derivative version of $\genOPsnok$ discussed in the introduction.
The second order accurate version of $\Qsnok$ was inverted in \cite{Eriksson20092659} and inspired by those results, we make a similar ansatz and derive $\Qsnok^{-1}$ of arbitrary order of accuracy. The result is given in Theorem~\ref{ThmGenInv1}.

\begin{theorem}\label{ThmGenInv1}

Consider the $(\ngpts+1) \times (\ngpts+1) $-matrices $Q$ from \eqref{SBPprop1} and $\Qsnok$ found in \eqref{Qsnok}.
The structures of $Q$ and $\Qsnok$ are
\begin{align}\label{Qparts}
Q=\left[\begin{array}{cc}-1/2&\qvec^\trans\\-\qvec&\Qbar\end{array}\right],&&
\Qsnok=\left[\begin{array}{cc}-1/2-\taul&\qvec^\trans\\-\qvec&\Qbar\end{array}\right],
\end{align}
where $\qvec$ is an $\ngpts \times1$-vector and $\Qbar$ is an $\ngpts \times \ngpts $-matrix.
The inverse of $\Qsnok$ 
is 
\begin{align}\label{QsnokInv}
\Qsnok^{-1}=\fundis-\frac{1}{\taul}\ett\cfet^\trans,
\end{align}
 where
 \begin{align}\label{GreenEtc}
\fundis=\left[\begin{array}{cc}0&0\\0&\Qbar^{-1}\end{array}\right],&&\ett=[1,1,\hdots,1]^\trans,&&\cfet^\trans=\left[\begin{array}{cc}1&-\qvec^\trans\Qbar^{-1}\end{array}\right].
\end{align}

\end{theorem}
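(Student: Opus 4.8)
The plan is to prove \eqref{QsnokInv} by direct verification: I would form the product of $\Qsnok$ with the proposed inverse and check that it equals the identity. Since $\Qsnok$ is square, a right inverse is automatically a two-sided inverse, so it suffices to confirm $\Qsnok\bigl(\fundis-\tfrac{1}{\taul}\ett\cfet^\trans\bigr)=I$. Beyond the block structure \eqref{Qparts}, the one substantive ingredient I need is that the underlying operator is consistent on constants: $D_1$ differentiates constants exactly, so $D_1\ett=\noll$, and hence $Q\ett=\PH D_1\ett=\noll$. This single relation is what drives the whole argument.

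First I would compute $\Qsnok\ett$. Using $\Qsnok=Q-\taul\el\el^\trans$ from \eqref{Qsnok} together with $Q\ett=\noll$ and $\el^\trans\ett=1$, everything collapses to $\Qsnok\ett=Q\ett-\taul\el(\el^\trans\ett)=-\taul\el$. This is the crucial simplification: $\Qsnok$ sends the all-ones vector to a multiple of $\el$, so the rank-one correction is mapped to $-\tfrac{1}{\taul}(\Qsnok\ett)\cfet^\trans=\el\cfet^\trans$, a matrix whose only nonzero entries sit in its first row. Next I would compute $\Qsnok\fundis$, which needs only $\Qbar\Qbar^{-1}=I$: because $\fundis$ from \eqref{GreenEtc} has a vanishing first row and column, the product is block upper-triangular, with lower-right block $I$, top-right block $\qvec^\trans\Qbar^{-1}$, and zero first column. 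Adding the two contributions, the stray top-right block $\qvec^\trans\Qbar^{-1}$ is cancelled exactly by the $-\qvec^\trans\Qbar^{-1}$ carried in $\cfet^\trans$, while $\el\cfet^\trans$ restores the missing $1$ in the top-left corner; the sum is $I$, which is \eqref{QsnokInv}.

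I do not expect a serious obstacle here---once $Q\ett=\noll$ is in hand the verification is routine---so the care required is mostly bookkeeping the $1\times1$ versus $\ngpts\times\ngpts$ block partition consistently, and noting the standing assumptions implicit in the statement, namely that $\Qbar$ is invertible (so that $\fundis$ and $\cfet$ are defined) and $\taul\neq0$ (so that the correction term makes sense). If anything, the genuinely non-mechanical part is the discovery rather than the check: recognising that consistency forces $Q\ett=\noll$ tells us both that $Q$ itself is singular and that $\Qsnok\ett=-\taul\el$, and this is exactly the structure that a single rank-one modification of the partial inverse $\fundis$ can repair---which is what motivates the ansatz \eqref{QsnokInv} in the first place.
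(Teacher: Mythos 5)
Your proposal is correct and matches the paper's own proof in all essentials: both verify $\Qsnok\Qsnok^{-1}=I$ directly, using $Q\ett=\noll$ (from consistency of $D_1$), the vanishing first row of $\fundis$, and the block structure to see that $Q\fundis+\el\cfet^\trans=I$. Your organization (computing $\Qsnok\ett=-\taul\el$ and $\Qsnok\fundis$ separately rather than expanding into four terms) is only a cosmetic variation, and your remarks on the standing assumptions $\taul\neq0$ and the invertibility of $\Qbar$ agree with the paper's discussion following the theorem.
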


\begin{proof}[Proof of Theorem~\ref{ThmGenInv1}]
We aim to show that $\Qsnok\Qsnok^{-1}=I$, where $I$ is the $(\ngpts +1)\times (\ngpts +1)$ identity matrix. 
Using $\Qsnok$ from \eqref{Qsnok} and $\Qsnok^{-1}$ from \eqref{QsnokInv}, 
we compute
\begin{align*}
\Qsnok\Qsnok^{-1}&=\left(Q-\taul \el\el^\trans\right)\left(\fundis-\frac{1}{\taul}\ett\cfet^\trans\right)\\
&=Q\fundis-\frac{1}{\taul}Q\ett\cfet^\trans-\taul\el\el^\trans\fundis+\el\el^\trans\ett\cfet^\trans.
\end{align*}
Note that $D_1\ett=0$, since $D_1$ in \eqref{SBPprop1} is a consistent difference operator. Hence, 
$Q\ett=0$.
Furthermore, $\el^\trans\fundis=0$ since the first row of $\fundis$ consists of zeros. These relations, the fact that $\el^\trans\ett=1$ and the structures of the components in \eqref{Qparts} and \eqref{GreenEtc} yields
\begin{align*}
\Qsnok\Qsnok^{-1}
&=Q\fundis+\el\cfet^\trans
=\left[\begin{array}{cc}0&\qvec^\trans\Qbar^{-1}\\0&\Ibar\end{array}\right]+\left[\begin{array}{cc}1&-\qvec^\trans\Qbar^{-1}\\0&0\end{array}\right]=I
\end{align*}
where $\Ibar$ is the $\ngpts \times \ngpts $ identity matrix.
 \end{proof}

 \begin{corollary} 
 \label{Cor1}
The structure of $\Qsnok^{-1}$ in \eqref{QsnokInv} implies that $\Qsnok$ is 
singular if $\taul=0$.
\end{corollary}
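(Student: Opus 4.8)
The plan is to read the singularity condition directly off the explicit inverse formula in Theorem~\ref{ThmGenInv1}. The formula $\Qsnok^{-1}=\fundis-\frac{1}{\taul}\ett\cfet^\trans$ is valid as an inverse precisely when $\taul\neq0$, since it contains the factor $1/\taul$; the natural guess is therefore that $\taul=0$ is exactly the value that destroys invertibility. First I would examine the structure of $\Qsnok$ in \eqref{Qparts} at $\taul=0$: the top-left entry becomes $-1/2$, so that $\Qsnok$ reduces to $Q$ itself. Thus the claim is equivalent to showing that $Q$ is singular.

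To show $Q$ is singular, I would invoke the fact already used in the proof of Theorem~\ref{ThmGenInv1}, namely that $Q\ett=0$, which follows from $D_1=\PH^{-1}Q$ being a consistent difference operator (it annihilates constants, since $D_1\ett=0$ and $\PH$ is invertible). Hence $\ett$ is a nonzero vector in the kernel of $Q$, so $Q$ has a nontrivial null space and is therefore singular. Since $\Qsnok=Q$ when $\taul=0$, this establishes that $\Qsnok$ is singular in that case.

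An alternative and perhaps cleaner route, staying closer to the corollary's phrasing, is to argue by contradiction from the inverse formula. Suppose $\taul=0$ and $\Qsnok$ were invertible with inverse $\Qsnok^{-1}$. The derivation in Theorem~\ref{ThmGenInv1} shows $Q\ett=0$, and with $\taul=0$ we have $\Qsnok\ett=Q\ett=0$; applying $\Qsnok^{-1}$ to both sides would give $\ett=\noll$, contradicting $\ett=[1,1,\hdots,1]^\trans\neq\noll$. This makes explicit why the $1/\taul$ scaling in \eqref{QsnokInv} is not an artifact: the kernel vector $\ett$ is exactly what the rank-one correction $-\frac{1}{\taul}\ett\cfet^\trans$ is repairing, and it blows up as $\taul\to0$.

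I do not anticipate a genuine obstacle here, as the result is essentially a corollary of the constructive inverse and the consistency relation $Q\ett=0$ already established above. The only point requiring a little care is the logical direction: the corollary as stated only asserts the sufficient direction ($\taul=0$ implies singular), so it suffices to exhibit the null vector $\ett$; I would not need to analyze whether other values of $\taul$ could also produce singularity, which would require a separate argument about the invertibility of $\Qbar$.
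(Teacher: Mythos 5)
Your proposal is correct. The paper offers no separate proof of Corollary~\ref{Cor1}; it simply asserts that the singularity follows from ``the structure of $\Qsnok^{-1}$ in \eqref{QsnokInv}'', i.e.\ from the presence of the factor $1/\taul$ in the rank-one correction term. Strictly speaking, the breakdown of one particular inverse formula at $\taul=0$ does not by itself prove that no inverse exists, and you correctly identify and close that gap: at $\taul=0$ the matrix $\Qsnok=Q-\taul\el\el^\trans$ reduces to $Q$, and the consistency relation $Q\ett=0$ (already established and used in the proof of Theorem~\ref{ThmGenInv1}) exhibits $\ett$ as an explicit nonzero kernel vector, so $\Qsnok$ is genuinely singular. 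Your contradiction variant makes the same point and nicely explains why the $1/\taul$ blow-up is not an artifact of the ansatz. You are also right that only the sufficient direction is claimed, so no analysis of other singular values of $\taul$ (which would hinge on the invertibility of $\Qbar$) is needed. In short, you follow the paper's intended reasoning but supply the one small piece of rigor the paper leaves implicit.
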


The existence of $\fundis$ and $\cfet$ in \eqref{GreenEtc}, and consequently 
the validity of Theorem~\ref{ThmGenInv1}, 
 rely on the 
 assumption that $\Qbar$ is invertible. 
In the (2,1) order accurate case --
where we by the notation "(2,1) order accurate",
 refer to a matrix $D_1$ which has second order of accuracy in the interior finite difference stencil and first order of accuracy at the boundaries --
 the inverse of
 $\Qbar$ 
is derived and presented 
 in Section~\ref{InversesExamples1}, which directly proves its existence. 
The same is done for the inverse of the (4,2) order accurate operator, which is presented in Section~\ref{InversesExamples14}.

Higher order operators, on the other hand, have free parameters.
For example, 
 for the diagonal norm (6,3) order accurate version of $D_1$ described in \cite{ref:STRA94}, $x_1$ is a free parameter.
In this case, we find numerically that $\Qbar$ is singular 
when $x_1\approx0.69$.

\begin{remark}
\label{RemGreen1}

For the steady version of \eqref{contPDE1}, that is $\scu_{x}=\force$ with $\scu(0)=\gl$,
we have
\begin{align*}
\scu(x)&=\gl+\int_0^{ \domsize}\mathcal{G}(x,\intvar)\force(\intvar)\,\mathrm {d} \intvar,
&
\mathcal{G}(x,\intvar)=\left\{\begin{array}{ll}1,&\intvar<x,\\0,&x\leq\intvar,\end{array}\right.
\end{align*}
where $\mathcal{G}$ is a 
Green's function.
Starting from $\scv=\Qsnok^{-1}\PH\Fsnok$, using \eqref{Qsnok} and \eqref{QsnokInv} as well as the relations $\cfet^\trans \el=1$ and $\fundis\el=\noll$ deduced from \eqref{GreenEtc}, we obtain
\begin{align*}
\scv
&=\gl\ett+\Qsnok^{-1}\PH\fh .
\end{align*}
Recall from the introduction that
$\genOPsnok^{-1}=\Qsnok^{-1}$ resembles
$\mathcal{G}$. 
The version of $\Qsnok^{-1}$ found in \eqref{Qinv2} in Appendix~\ref{InversesExamples1} (which corresponds to the second order accurate operator) is
\begin{align*}
\left(\Qsnok^{-1}\right)_{i,j}=\left\{\begin{array}{ll}
1-(1+1/\taul)(-1)^j,&0\leq j\leq i\leq \ngpts ,\\
(-1)^{i+j}-(1+1/\taul)(-1)^j,& 0\leq i \leq j\leq \ngpts .
\end{array}\right.
\end{align*}
The dual consistent choice $\taul=-1$ 
is optimal in the sense that it
cancels 
 the oscillations
such that 
$(\Qsnok^{-1})_{i,j}=1
$ for $j\leq i$, however 
 $(\Qsnok^{-1})_{i,j}=(-1)^{i+j}\neq0$ 
 for $i\leq j$. 
If we instead let $\taul\to-\infty$, interpreted as mimicking the injection treatment, results in $\Qsnok^{-1}=\fundis$.
\end{remark}

\section{The second derivative}
\label{SecondDeriv}

Now consider the scalar heat equation with Robin boundary conditions, that is 
\begin{align}
\label{contPDE}
\begin{array}{rll}
\scu_t-
\scu_{xx}=&\hspace{-7pt}\force ,\hspace{20pt}&x\in[0, \domsize],\hspace{20pt}\vspace{4pt}\\ \al\scu-\bl\scu_{x}=&\hspace{-7pt}\gl,&x= 0,\vspace{4pt}\\\ar\scu+\br\scu_x=&\hspace{-7pt}\gr,&x= \domsize,
\end{array}
\end{align}
valid for $t\geq0$, with initial condition $\scu(x,0)=\initial(x)$. The forcing function $\force(x,t)$, the initial data $\initial(x)$ and the boundary data $\glr(t)$ are 
known functions.

We multiply the 
partial differential equation in \eqref{contPDE} by $\scu$ and integrate the result over the spatial domain, 
with the data put to $\force=0$ and $\glr=0$. 
Thereafter using integration by parts and the boundary conditions, yields
\begin{align*}
\ddt\|\scu\|^2+2\|\scu_x\|^2=&-2 \frac{\br}{\ar}\scu_{x}( \domsize,t)^2-2 \frac{\bl}{\al}\scu_{x}(0,t)^2.
\end{align*}
For 
a decaying growth rate, we need $\alr\blr\geq0$.

\subsection{The semi-discrete scheme}

Using the SBP-SAT finite difference method, 
we obtain a 
scheme approximating \eqref{contPDE} as
\begin{align}
\label{discSCHEME}
\begin{split}
\scv_t-D_2\scv=\fh &+\PH ^{-1}( \taul \el -\sigmal \dsel) \left( \al\el^\trans\scv-\bl\dsel^\trans \scv-\gl\right)\\&+\PH ^{-1}(\taur \er +\sigmar \dser) \left( \ar\er^\trans\scv+\br\dser^\trans \scv-\gr\right),
\end{split}
\end{align}
where $\scv$, $\fh$, $\PH$ and $\elr$ are described as in Section~\ref{Semi1}.
The matrix $D_2$ approximates the second derivative operator, 
and fulfills the SBP-properties
\begin{align}\label{SBPprop2}
D_2=\PH ^{-1}(-A+\er\dser^\trans-\el\dsel^\trans),&&\hspace{23pt}A=A^\trans\geq0.
\end{align}
The vectors $\dsel$ and $\dser$ are consistent finite difference stencils approximating the first derivative, see \cite{Carpenter1999341}. 
Two common categories of $D_2$ operators are {\it wide-stencil} and {\it narrow-stencil} operators. Wide-stencil operators can be factorized as $D_2= D_1^2$, and the term "narrow" describes finite difference schemes with a minimal stencil width \cite{MattComp}.

The penalty parameters $\taulr$ and $\sigmalr$ in \eqref{discSCHEME} are scalars that will be further specified and discussed in the next sections. Now, we use \eqref{SBPprop2} to rewrite \eqref{discSCHEME} as
\begin{align}\label{SchemeShort}
\scv_t+
\PH^{-1}\OPsnok\scv&=\Fsnok,
\end{align}
where 
\begin{align}\label{Kop2}
\begin{split}\OPsnok
&=\hspace{-1pt}A-\left[\hspace{-5pt}\begin{array}{c}\el^\trans\\-\dsel^\trans\end{array}\hspace{-4pt}\right]^\trans\left[\hspace{-3pt}\begin{array}{cc}\taul\al&1+\taul\bl\\\sigmal\al&\sigmal\bl\end{array}\hspace{-3pt}\right]\left[\hspace{-5pt}\begin{array}{c}\el^\trans\\-\dsel^\trans\end{array}\hspace{-4pt}\right]
-\left[\hspace{-3pt}\begin{array}{c}\er^\trans\\\dser^\trans\end{array}\hspace{-4pt}\right]^\trans\left[\hspace{-3pt}\begin{array}{cc}\taur\ar&1+\taur\br\\\sigmar\ar&\sigmar\br\end{array}\hspace{-3pt}\right]\left[\hspace{-3pt}\begin{array}{c}\er^\trans\\\dser^\trans\end{array}\hspace{-4pt}\right]
\end{split}\end{align}
and where $\Fsnok=\fh -\PH ^{-1}( \taul \el -\sigmal \dsel)\gl-\PH ^{-1}(\taur \er +\sigmar \dser) \gr$.
We identify $\OPsnok$ as the second derivative version of the matrix
$\genOPsnok$ from 
 the introduction.

\subsubsection{Stability}
\label{stability2}

To show energy stability, we multiply \eqref{discSCHEME} by $\scv^\trans\PH$ from the left and use the relations \eqref{SBPprop2}.
We thereafter add the transpose, 
and 
 let $\fh=0$ and $\glr=0$. 
 This yields
\begin{align}\label{BeforeStabtricks}\begin{split}
\ddt\|\scv\|^2_\PH+2\scv^\trans A\scv&=2\scv^\trans(\er\dser^\trans -\el\dsel^\trans )\scv\\
&+2\scv^\trans( \taul \el -\sigmal \dsel) \left( \al\el^\trans\scv-\bl\dsel^\trans \scv\right)\\
&+2\scv^\trans(\taur \er +\sigmar \dser)\left( \ar\er^\trans\scv+\br\dser^\trans \scv\right),\end{split}
\end{align}
where we 
need to show that $\ddt\|\scv\|^2_\PH\leq0$.
We will determine the stability limits of $\taulr$ and $\sigmalr$
using a procedure sometimes called the {\it borrowing technique} 
\cite{Carpenter1999341,ref:GONG06,APPELO2007531,MATTSSON20088753,SVARD20084805,WangKreiss2017,ORDER-PRESERVING}. The idea is
 to "borrow" a maximum amount $\app$ of "positivity" 
 from $A$, more precisely as 
\begin{align}\label{BorrowProp}
A=\Atilde+h\app (\dsel\dsel^\trans+\dser\dser^\trans),&&\Atilde\geq0,\quad\app>0.
\end{align}
Inserting the relation in \eqref{BorrowProp} into 
 \eqref{BeforeStabtricks}, we obtain
\begin{align*}
\ddt\|\scv\|^2_\PH+2\scv^\trans \Atilde\scv&=\left[\begin{array}{c}\el^\trans\scv\\-\dsel^\trans\scv \end{array}\right]^\trans\left[\begin{array}{cc} 2\taul \al&1+\taul\bl +\sigmal \al\\1+\taul\bl+\sigmal \al&2\sigmal\bl-2h\app\end{array}\right] \left[\begin{array}{c}\el^\trans\scv\\-\dsel^\trans\scv \end{array}\right]\\&+\left[\begin{array}{c}\er^\trans\scv\\\dser^\trans \scv\end{array}\right]^\trans\left[\begin{array}{cc} 2\taur \ar&1+\taur\br+\sigmar \ar \\1+\taur\br+\sigmar \ar &2\sigmar\br-2h\app\end{array}\right] \left[\begin{array}{c}\er^\trans\scv\\\dser^\trans\scv \end{array}\right].
\end{align*}
For stability, we need both the matrices in the two quadratic forms above to be negative semi-definite. 
This is fulfilled if
\begin{align}\label{stabbeforedual}\begin{split}
2\taulr \alr&\leq0\\2(\sigmalr\blr-h\app)&\leq0\\
(1+\sigmalr \alr+\taulr\blr)^2&\leq4\taulr \alr(\sigmalr\blr-h\app).\end{split}
\end{align}

\subsubsection{Dual consistency}

To make the scheme \eqref{discSCHEME} dual consistent 
we first note that the operator 
 $\partial^2/\partial x^2$ (including boundary conditions) 
 is a symmetric operator and that the matrix $\OPsnok$ must be symmetric to mimic this.
From 
\eqref{Kop2}
 it is clear that 
$\OPsnok$ is symmetric if $1+\taulr\blr=\sigmalr\alr$.
%
Let
\begin{align}\label{delta}
\duall\equiv1 +\taul\bl-\sigmal\al&&\dualr\equiv1+ \taur\br- \sigmar\ar,
\end{align}
 where
$\duallr=0$
 for dual consistent choices of penalty parameters. 
The relations in \eqref{delta}, with $\duallr=0$, can also be derived from the penalty parameters of the scalar problem in \cite{ErikssonDual}.
For a background and more thorough descriptions  
of dual consistency, see \cite{HickenNo17}.

Note that now, 
using the dual consistency parameters $\duallr$ defined in \eqref{delta},
the three stability requirements in \eqref{stabbeforedual} can be reformulated as
\begin{align}\label{stabexpdual}
\taulr \alr\leq0,&&\sigmalr\blr\leq h\app,&&
\duallr^2\leq-4 \alr(\taulr h\app+\sigmalr ). 
\end{align}

\subsection{The inverse of the discretization matrix}

We consider the steady version of \eqref{SchemeShort}, that is $\PH^{-1}\OPsnok\scv=\Fsnok$, 
which has a unique solution $\scv=\OPsnok^{-1}\PH\Fsnok$, if  $\OPsnok^{-1}$ exists.
We derive this inverse 
and present the result 
in Theorem~\ref{ThmGenInv}.

\begin{theorem}\label{ThmGenInv}

Consider $\OPsnok$ in \eqref{Kop2}, which depends on $A$ and $\dselr$ in \eqref{SBPprop2} 
and on the boundary related scalars $\taulr$, $\sigmalr$, $\alr$ and $\blr$. 
Let the parts of $A$ 
be denoted as follows,
\begin{align}\label{Aparts}
A=\left[\begin{array}{ccc}\Abeg&\Awest^\trans&\Aoff\\\Awest&\Amid&\Aeast\\\Aoff&\Aeast^\trans&\Aend\end{array}\right],
\end{align}
where 
$\Abeg$, $\Aend$ and $\Aoff$
 are scalars, 
 $\Avecs$
 are $(\ngpts -1)\times1$-vectors and $\Amid$ is an $(\ngpts -1)\times(\ngpts -1)$-matrix. 
The inverse of $\OPsnok$ 
is 
 \begin{align}\label{OPsnokinvRobin}
\OPsnok^{-1}&=\GreenDisc+\left[\begin{array}{cccc}-\sigmal\pickupl& -\sigmar\pickupr&\ett-\xfet/\domsize & \xfet/\domsize \end{array}\right] \detmatris^{-1} \left[\begin{array}{c}\pickupl^\trans\\ \pickupr^\trans\\\bl(\ett-\xfet/\domsize )^\trans\\ \br\xfet^\trans /\domsize\end{array}\right]
 \end{align}
 where 
 $\ett=[1\ 1\ 1\ \hdots\ 1]^\trans$ and $\xfet=h[0\ 1\ 2\ \hdots\ \ngpts]^\trans$, and where
 \begin{align}\label{InverseParts}
\GreenDisc=\left[\begin{array}{ccc}0&0&0\\0&\Amid^{-1}&0\\0&0&0\end{array}\right],&&
\pickupl\equiv\ett-\xfet/\domsize -\GreenDisc\dsel,&&\pickupr\equiv\xfet /\domsize+\GreenDisc\dser.
\end{align} 
Furthermore, $ \detmatris$ in \eqref{OPsnokinvRobin} is a $4\times4$-matrix
 \begin{align}\label{determining}
 \detmatris=\left[\begin{array}{cccc}\taul+\sigmal\qhatl&-\sigmar\qhatc& 0&0\\ -\sigmal\qhatc&\taur+\sigmar\qhatr&0&0\\\duall&0&\al+\bl/\domsize &-\bl/\domsize \\ 0&\dualr&- \br/\domsize & \ar+\br/\domsize \end{array}\right]
 \end{align}
that depends on $\alr$ and $\blr$, that is on the choices of boundary conditions in \eqref{contPDE}, on the choices of penalty parameters $\taulr$ and $\sigmalr$ in \eqref{discSCHEME} and on the duality parameters $\duallr$ in \eqref{delta}, as well as on the scalars
 \begin{align}\label{qsnokdefALT}
\qhatl \equiv- \dsel^\trans \pickupl,&&\qhatr \equiv\dser^\trans\pickupr&& \qhatc \equiv\dsel^\trans\pickupr=-\dser^\trans\pickupl.
 \end{align}
\end{theorem}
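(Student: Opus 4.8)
The plan is to verify directly that $\OPsnok\,\OPsnok^{-1}=I$ using the candidate inverse \eqref{OPsnokinvRobin}. Writing $\OPsnok^{-1}=\GreenDisc+U\detmatris^{-1}W^\trans$, where $U$ is the $(\ngpts+1)\times4$ matrix whose columns are $-\sigmal\pickupl,\,-\sigmar\pickupr,\,\ett-\xfet/\domsize,\,\xfet/\domsize$ and $W^\trans$ is the $4\times(\ngpts+1)$ right factor in \eqref{OPsnokinvRobin}, this is equivalent to solving the steady system $\OPsnok\scv=\PH\Fsnok$ with the ansatz $\scv=\GreenDisc\PH\Fsnok+Uc$ and showing that the coefficient vector is forced to be $c=\detmatris^{-1}W^\trans\PH\Fsnok$. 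The structural observation that makes this tractable is that both $I-\OPsnok\GreenDisc$ and $\OPsnok U$ have all their columns in the four-dimensional space $\mathrm{span}\{\el,\er,\dsel,\dser\}$; hence the whole identity collapses to matching the coefficients of these four vectors, i.e. to a single $4\times4$ relation that should reproduce $\detmatris$.

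First I would record the discrete-calculus facts that encode consistency. Since $D_2$ in \eqref{SBPprop2} is a consistent approximation of $\partial^2/\partial x^2$, it annihilates the sampled constant and linear functions, $D_2\ett=\noll$ and $D_2\xfet=\noll$; combined with the exactness of the boundary stencils on low-degree polynomials, $\dsel^\trans\ett=\dser^\trans\ett=0$ and $\dsel^\trans\xfet=\dser^\trans\xfet=1$, this yields $A\ett=\noll$ and $A\xfet=\er-\el$. From the block form \eqref{Aparts}--\eqref{InverseParts} one reads off that $\GreenDisc$ is symmetric (being built from the symmetric principal block $\Amid$), that $\el^\trans\GreenDisc=\er^\trans\GreenDisc=\noll^\trans$, and that $A\GreenDisc$ reproduces any vector on its interior rows, so $A\GreenDisc-I$ is supported only on rows $0$ and $\ngpts$. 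These steps rely on the standing assumption that $\Amid$ is invertible, which is exactly what guarantees that $\GreenDisc$ exists.

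Next I would compute the boundary readings of the four correction vectors, since these are precisely the ingredients of $\detmatris$. Using $\el^\trans\GreenDisc=\er^\trans\GreenDisc=\noll^\trans$ and the polynomial exactness above, one finds $\el^\trans\pickupl=1$, $\er^\trans\pickupl=0$, $\dsel^\trans\pickupl=-\qhatl$, $\dser^\trans\pickupl=-\qhatc$, together with the mirror-image identities for $\pickupr$; here the symmetry of $\GreenDisc$ is what makes the two expressions for $\qhatc$ in \eqref{qsnokdefALT} agree. Feeding these readings, and $A\ett=\noll$, $A\xfet=\er-\el$, into $\OPsnok$ from \eqref{Kop2} and expanding the two rank-two penalty terms, I would write each of $\OPsnok\GreenDisc$ and the four columns of $\OPsnok U$ as explicit linear combinations of $\el,\er,\dsel,\dser$. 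Equating the coefficients of these four vectors then turns $\OPsnok U\detmatris^{-1}W^\trans=I-\OPsnok\GreenDisc$ into a $4\times4$ identity, in which the penalty block $\bigl[\begin{smallmatrix}\taul+\sigmal\qhatl&-\sigmar\qhatc\\-\sigmal\qhatc&\taur+\sigmar\qhatr\end{smallmatrix}\bigr]$, the boundary-condition block $\bigl[\begin{smallmatrix}\al+\bl/\domsize&-\bl/\domsize\\-\br/\domsize&\ar+\br/\domsize\end{smallmatrix}\bigr]$ and the coupling block $\mathrm{diag}(\duall,\dualr)$ should emerge exactly as in \eqref{determining}.

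The hard part will be the bookkeeping of the rank-two penalty terms. When $\OPsnok$ acts on $\GreenDisc$, the contributions $\sigmal\bl\,\dsel\dsel^\trans\GreenDisc$ and $\sigmar\br\,\dser\dser^\trans\GreenDisc$ appear; these are genuinely full matrices rather than boundary-supported corrections, and their cancellation against the $\pickupl,\pickupr$ columns of $\OPsnok U$ is not term-by-term. This is where the symmetry of $\GreenDisc$, the fact that $A\GreenDisc\dsel$ equals $\dsel$ up to a correction living only on rows $0$ and $\ngpts$, and the definitions \eqref{qsnokdefALT} of $\qhatl,\qhatr,\qhatc$ all have to conspire. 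Confirming that the reduced system is precisely the block lower-triangular $\detmatris$---so that its determinant factorizes as the penalty block times the boundary-condition block, which is what will later govern singularity---is the delicate endpoint of the calculation.
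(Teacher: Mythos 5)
Your proposal is correct and follows essentially the same route as the paper's own proof in Appendix~B: a direct verification that $\OPsnok\OPsnok^{-1}=I$, built on exactly the preliminary identities $A\xfet=\er-\el$, $A\GreenDisc=I-\el(\ett-\xfet/\domsize)^\trans-\er\xfet^\trans/\domsize$, $A\pickupl=-\dsel$, $A\pickupr=\dser$ and the boundary readings of $\pickuplr$, after which both $I-\OPsnok\GreenDisc$ and $\OPsnok U$ are written as products of an $(\ngpts+1)\times4$ factor with columns in $\mathrm{span}\{\el,\er,\dsel,\dser\}$ and the fixed right factor, so that everything telescopes through the $4\times4$ identity $\OPsnok U=V\detmatris$. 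The paper carries out the bookkeeping you flag as the hard part in equations \eqref{AAinv1}--\eqref{AAinv2}, and it resolves exactly as you predict.
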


\begin{proof}[Proof of Theorem~\ref{ThmGenInv}]
The proof is given in Appendix~\ref{ProofThmInv}.
\end{proof}

Note that the quantities in \eqref{InverseParts}, and thus the validity of Theorem~\ref{ThmGenInv},
rely on the existence of $\Amid^{-1}$. 
%
In Appendix~\ref{InversesExamples}, 
the explicit values of 
$\Amid^{-1}$, as well as of
$\GreenDisc$, $\pickuplr$, $\qhatlr$ and $\qhatc$, are provided
for the (2,0), (2,1) and (4,2) order accurate narrow-stencil operators and the (2,0) order accurate wide-stencil operator. 
%
This directly proves the existence of $\Amid^{-1}$ for 
these operators.
Higher order accurate operators have free parameters, 
but empirically we can draw the  conclusion that $\Amid^{-1}$ must exist  at least 
for the parameter choices in \cite{Mattsson2004503}, 
 since  the operators therein have been applied successfully for many years.

Given the existence of $\Amid^{-1}$, we note that $\OPsnok$ in \eqref{OPsnokinvRobin} is singular if and only if 
$ \detmatris$ in \eqref{determining} 
is singular.
The matrix $ \detmatris$ is in turn singular if any of the two relations
\begin{align}
(\al+\bl/\domsize )( \ar+\br /\domsize)-\bl\br/\domsize^2=0\label{singcond1}\\\label{singcond2}
(\taul+\sigmal\qhatl)(\taur+\sigmar\qhatr)-\sigmal\sigmar\qhatc^2=0
\end{align}
holds. The first condition 
is related to
 the continuous boundary conditions, and makes the matrix 
 singular if Neumann boundary conditions are imposed on both boundaries, i.e. if $\al=\ar=0$.
 The second condition has to do with the choice of penalty parameters, and leads us to the following corollary of Theorem~\ref{ThmGenInv}:

\begin{corollary}\label{CorGenInv}

The matrix $\OPsnok$, described in \eqref{Kop2}, is singular when the penalty parameters simultaneous fulfill 
$\taul =-\left(\qhatl+\godtycklig |\qhatc|\right)\sigmal$ and $\taur =-\left(\qhatr+|\qhatc|/\godtycklig\right)\sigmar$,
where $\godtycklig\neq0$. 
If $\qhatc$, $\sigmal$ or $\sigmar$ is zero, the matrix $\OPsnok$ is singular if either $\taul =-\sigmal\qhatl$ or if $\taur =-\sigmar\qhatr$.
\end{corollary}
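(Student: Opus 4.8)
The plan is to read off the singularity of $\OPsnok$ directly from the $4\times4$ matrix $\detmatris$ in \eqref{determining}. By Theorem~\ref{ThmGenInv}, and granted that $\Amid^{-1}$ exists, $\OPsnok$ is singular precisely when $\detmatris$ is singular, so the whole argument reduces to analysing $\det\detmatris$.

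First I would exploit the block structure of $\detmatris$. The zero block in its upper-right corner makes $\detmatris$ block lower-triangular, so its determinant factors as $\det\detmatris=\det B_1\cdot\det B_2$, where $B_1$ is the top-left $2\times2$ block (containing only the penalty data $\taulr$, $\sigmalr$, $\qhatlr$, $\qhatc$) and $B_2$ is the bottom-right $2\times2$ block (containing only $\alr$, $\blr$, $\domsize$). Computing the two $2\times2$ determinants shows that $\det B_1$ equals the left-hand side of \eqref{singcond2} and $\det B_2$ equals the left-hand side of \eqref{singcond1}. Hence $\detmatris$ is singular iff \eqref{singcond1} or \eqref{singcond2} holds, which is exactly the claim stated just before the corollary; the corollary itself concerns only the penalty condition \eqref{singcond2}.

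Next I would solve \eqref{singcond2}, namely $(\taul+\sigmal\qhatl)(\taur+\sigmar\qhatr)=\sigmal\sigmar\qhatc^2$, for the penalty parameters. Writing $a\equiv\taul+\sigmal\qhatl$ and $d\equiv\taur+\sigmar\qhatr$, the relation reads $ad=\sigmal\sigmar\qhatc^2$. When $\sigmal\sigmar\qhatc^2\neq0$ both factors must be nonzero, so I introduce a free nonzero parameter $\godtycklig$ and set $a=-\godtycklig|\qhatc|\sigmal$; solving $ad=\sigmal\sigmar\qhatc^2$ for $d$ then forces $d=-(|\qhatc|/\godtycklig)\sigmar$, using $\qhatc^2=|\qhatc|^2$. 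Unwinding $a$ and $d$ yields precisely $\taul=-(\qhatl+\godtycklig|\qhatc|)\sigmal$ and $\taur=-(\qhatr+|\qhatc|/\godtycklig)\sigmar$, and a direct substitution back confirms $ad=\sigmal\sigmar\qhatc^2$. Since every nonzero value of $a$ arises from some $\godtycklig\neq0$, this parametrisation exhausts all solutions of \eqref{singcond2} in the nondegenerate case, not merely a subfamily of them.

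Finally I would dispose of the degenerate case in which $\qhatc$, $\sigmal$ or $\sigmar$ vanishes. Then $\sigmal\sigmar\qhatc^2=0$, so $ad=0$ and the product is zero iff one of the factors is, that is iff $\taul=-\sigmal\qhatl$ or $\taur=-\sigmar\qhatr$, which is the second assertion of the corollary. I do not expect a genuine obstacle here: the only steps requiring care are recognising the block-triangular factorisation $\det\detmatris=\det B_1\cdot\det B_2$ and verifying that the $\godtycklig$-parametrisation is exhaustive rather than just sufficient; both become routine once the factorisation is in place.
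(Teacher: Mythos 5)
Your proof is correct and follows essentially the same route as the paper: the paper likewise reduces everything to condition \eqref{singcond2} and makes the ansatz $\taulr=-\sigmalr\qhatlr-\godtyckligare_\indexlr$, obtaining $\godtyckligare_\indexl\godtyckligare_\indexr=\sigmal\sigmar\qhatc^2$ and the same $\godtycklig$-parametrisation (your $a$ and $d$ are just $-\godtyckligare_\indexl$ and $-\godtyckligare_\indexr$). The two extras you supply --- the block lower-triangular factorisation of $\det\detmatris$ justifying the claim preceding the corollary, and the observation that the parametrisation is exhaustive rather than merely sufficient --- are respectively left implicit and not claimed in the paper, but both checks are correct.
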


\begin{proof}[Proof of Corollary~\ref{CorGenInv}]
We make the ansatz $\taulr =-\sigmalr\qhatlr-\godtyckligare_\indexlr$ with some unknown scalars $\godtyckligare_\indexlr$. Inserting this into \eqref{singcond2} above gives $\godtyckligare_\indexl\godtyckligare_\indexr=\sigmal\sigmar\qhatc^2$ which is fulfilled for all pairs $\godtyckligare_\indexl=\sigmal|\qhatc|\godtycklig$ and $\godtyckligare_\indexr=\sigmar|\qhatc|/\godtycklig$ with arbitrary choices of $\godtycklig\neq0$.
If $\qhatc$, $\sigmal$ or $\sigmar$ is equal to zero, it is enough if either $\godtyckligare_\indexl=0$ or $\godtyckligare_\indexr=0$.
\end{proof}

The requirements on $A$ and $\dselr$ in
Theorem~\ref{ThmGenInv} are only that $A$ is symmetric, that $\Amid^{-1}$ exists (as discussed above) and that $D_2$ and $\dselr$ in \eqref{SBPprop2} are consistent such that
the relations
\eqref{Sconsistent} and
\eqref{AxA1-x}
in Appendix~\ref{ProofThmInv} holds.
In addition we will
assume that $D_2$ is constructed such the left and right boundary closures are equivalent. This implies that $A$ is a centrosymmetric matrix, that is $A_{i,j} = A_{\ngpts-i,\ngpts-j}$ for all $0 \leq i,j \leq \ngpts$, and that $(\dsel)_i = -(\dser)_{\ngpts-i}$ for $0 \leq i \leq \ngpts$.
This additional assumption leads to $\qhatl=\qhatr$ (this is easiest seen by expressing the quantities in \eqref{qsnokdefALT} as
 $\qhatlr = 1/\domsize+\dselr^\trans\GreenDisc\dselr$ and $\qhatc =1/\domsize+\dselr^\trans\GreenDisc\dserl$ and thereafter using the fact that the inverse of a centrosymmetric matrix is also centrosymmetric).
For later reference we define
\begin{align}
\label{qhat}
\qhattot\equiv\qhatlr+|\qhatc|,
\end{align}
and assume  that the penalty is chosen to be equally strong on both boundaries: 

\begin{assumption}
\label{remmuqu}
Choosing 
an equal 
penalty strength
on both boundaries corresponds to having $\godtycklig=1$ in Corollary~\ref{CorGenInv}.
If in addition 
equivalent boundary closures
are assumed, such that $\qhatl=\qhatr$, we can use
$\qhattot\equiv\qhatlr+|\qhatc|$ from \eqref{qhat}.
This simplifies the condition of singularity in Corollary~\ref{CorGenInv} to $\taulr =-\qhattot\sigmalr$.
\end{assumption}

\begin{remark}
\label{RemGreen2}
The inverse of $\OPsnok$ 
mimics 
 a fundamental solution.
For example, the
Green's function 
$\mathcal{G}$ of 
Poisson's equation, $-\scu_{xx}=\force$ with $\scu(0)=\scu( \domsize)=0$,
 is
\begin{align*}
\scu(x)&=\int_0^{ \domsize}\mathcal{G}(x,\intvar)\force(\intvar)\,\mathrm {d} \intvar,
&
\mathcal{G}(x,\intvar)=\left\{\begin{array}{ll}\intvar(1-x/\domsize ),&\intvar<x,\\x(1-\intvar/\domsize ),&x\leq\intvar.\end{array}\right.
\end{align*}
Recalling that the matrix $\PH$ has the role of a quadrature rule, 
we see the clear similarity to the 
time-independent, homogeneous version of \eqref{SchemeShort}, $\scv=\OPsnok^{-1}\PH\fh$. 
The resemblance is more obvious if 
 the penalty dependent part in \eqref{OPsnokinvRobin} is ignored,
 since then 
$\scv=\GreenDisc\PH\fh$.
For the second order accurate approximation, $\GreenDisc$ is exact in the grid points, see \eqref{Ksnokinv2int}.
This is identical with the result noted for the classical finite difference method using {\it injection} instead of SAT, compare
\cite{Beyn1982,Stetter1968}.
 \end{remark}

\subsection{Relations between stability, singularity and dual consistency }
\label{RelStabSingDual}

We take a look at the relation between the stability requirements on the scheme \eqref{discSCHEME} and the conditions that make its discretization matrix singular.
First, we note that:
\begin{theorem}\label{happ=1/qsnok} Consider $\app$ in \eqref{BorrowProp} and $\qhattot$ in \eqref{qhat}.
It holds that
$h\app=1/\qhattot$.
\end{theorem}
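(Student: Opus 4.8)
\emph{Plan.} Since \eqref{BorrowProp} prescribes the \emph{largest} $\app$ that can be borrowed, the product $h\app$ is governed by a variational problem. Indeed $\Atilde=A-h\app(\dsel\dsel^\trans+\dser\dser^\trans)\geq0$ is equivalent to $\scv^\trans A\scv\geq h\app\big[(\dsel^\trans\scv)^2+(\dser^\trans\scv)^2\big]$ for every $\scv$, so the maximal borrowing is
\[
h\app=\min_{\scv}\frac{\scv^\trans A\scv}{(\dsel^\trans\scv)^2+(\dser^\trans\scv)^2},
\]
the minimum being over all $\scv$ for which the denominator is nonzero. I would first record that $A\ett=\noll$ and $\dsel^\trans\ett=\dser^\trans\ett=0$ (consistency, exactly as in the proof of Theorem~\ref{ThmGenInv}), and that, for the operators considered here, $\ker A=\mathrm{span}\{\ett\}$; hence the denominator vanishes only on $\ker A$ and the ratio may be restricted to $\ett^\perp$, where $A$ is positive definite. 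Writing $\mathbf{D}=[\dsel\ \dser]$ and substituting $\mathbf{w}=A^{1/2}\scv$ converts the reciprocal into $\tfrac1{h\app}=\lambda_{\max}\!\big(\mathbf{D}^\trans A^{+}\mathbf{D}\big)$, a $2\times2$ eigenvalue problem, where $A^{+}$ inverts $A$ on $\ett^\perp$ and is applied to the columns $\dselr$, which lie in $\ett^\perp$.

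The crux is then to identify the $2\times2$ matrix $\mathbf{D}^\trans A^{+}\mathbf{D}$ with the scalars $\qhatlr$ and $\qhatc$. For this I would establish the two identities
\[
A\pickupl=-\dsel,\qquad A\pickupr=\dser,
\]
which are essentially the relations \eqref{Sconsistent} and \eqref{AxA1-x} of Appendix~\ref{ProofThmInv}. To derive the first self-containedly, note from the block form \eqref{Aparts} and the definition of $\GreenDisc$ in \eqref{InverseParts} that $A\GreenDisc\dsel$ reproduces $\dsel$ on every interior row, so $A\GreenDisc\dsel-\dsel$ is supported on the two boundary rows; since it also lies in the range of $A$ it is orthogonal to $\ett$, which forces it to equal $c(\el-\er)$ for a single scalar $c$. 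Using $A\ett=\noll$ and $A\xfet=\er-\el$ (consistency on linear functions) one then gets $A\pickupl=-\dsel+(1/\domsize-c)(\el-\er)$, and pairing with $\xfet$, together with $(A\xfet)^\trans\pickupl=(\pickupl)_\ngpts-(\pickupl)_0=0-1=-1$, $\dsel^\trans\xfet=1$ and $\xfet^\trans(\el-\er)=-\domsize$, pins down $c=1/\domsize$, so that the boundary defect cancels and $A\pickupl=-\dsel$. The identity $A\pickupr=\dser$ follows identically, or by the centrosymmetry (equivalent boundary closures) assumption.

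Given these, $-\pickupl$ and $\pickupr$ solve $A\scv=\dsel$ and $A\scv=\dser$ respectively up to multiples of $\ett$; as $\dselr^\trans\ett=0$, these multiples drop out of the quadratic forms, so by \eqref{qsnokdefALT}
\[
\mathbf{D}^\trans A^{+}\mathbf{D}
=\left[\begin{array}{cc}-\dsel^\trans\pickupl&\dsel^\trans\pickupr\\-\dser^\trans\pickupl&\dser^\trans\pickupr\end{array}\right]
=\left[\begin{array}{cc}\qhatl&\qhatc\\\qhatc&\qhatr\end{array}\right].
\]
Under the equivalent--boundary--closure assumption $\qhatl=\qhatr=\qhatlr$, so this symmetric matrix has eigenvalues $\qhatlr\pm|\qhatc|$, its largest being $\qhatlr+|\qhatc|=\qhattot$ by \eqref{qhat}. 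Hence $1/(h\app)=\qhattot$, i.e.\ $h\app=1/\qhattot$.

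\emph{Main obstacle.} The delicate point is the exact identity $A\pickupl=-\dsel$: a priori $A\GreenDisc\dsel-\dsel$ could retain an uncancelled boundary component, and it is only the interplay of consistency ($A\xfet=\er-\el$), the prescribed boundary values $(\pickupl)_0=1$, $(\pickupl)_\ngpts=0$, and orthogonality to $\ett$ that makes the defect $c(\el-\er)$ vanish. The second careful step is the reduction of the rank--two generalized eigenvalue problem to the $2\times2$ Gram matrix $\mathbf{D}^\trans A^{+}\mathbf{D}$, where one must handle the null space $\mathrm{span}\{\ett\}$ and justify restricting to $\ett^\perp$.
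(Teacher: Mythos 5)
Your proof is correct, but it takes a genuinely different route from the paper's. The paper's own argument also rests on the identities $A\pickupl=-\dsel$, $A\pickupr=\dser$ (its equation \eqref{Apickup}) and the definitions \eqref{qsnokdefALT}, but it then introduces the auxiliary vector $\widetilde{\scv}=\scv-\pickupl\arbscall+\pickupr\arbscalr$ with the ansatz $\arbscall=(\mainl\dsel^\trans+\minorr\dser^\trans)\scv$, $\arbscalr=(\mainr\dser^\trans+\minorl\dsel^\trans)\scv$, and solves the resulting constrained minimization of the borrowable coefficients $\zlr$ subject to $\zl=\zr$ and $\zc=0$ by an explicit elliptic parametrization in the variables $\xlr,\ylr$. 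You instead recognize $h\app$ as the minimum of a generalized Rayleigh quotient, restrict to $\ett^\perp$, and reduce to the $2\times2$ eigenvalue problem for the Gram matrix $\mathbf{D}^\trans A^{+}\mathbf{D}=\bigl[\begin{smallmatrix}\qhatl&\qhatc\\\qhatc&\qhatr\end{smallmatrix}\bigr]$, whose largest eigenvalue is $\qhatlr+|\qhatc|=\qhattot$ under the equivalent-boundary-closure assumption. Your route is shorter and more conceptual, and it exhibits the extremizing direction explicitly; its one extra ingredient is the claim $\ker A=\mathrm{span}\{\ett\}$, which you assert rather than prove, but which does follow from the invertibility of $\Amid$ together with \eqref{xuttrycktaiA} and \eqref{Bonus}, so it is available under the paper's standing assumptions. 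Two small remarks: the denominator $(\dsel^\trans\scv)^2+(\dser^\trans\scv)^2$ does not vanish "only on $\ker A$" --- it vanishes on the much larger subspace $\{\dsel,\dser\}^{\perp}$ --- but what your argument actually needs, and what is true, is that numerator and denominator are both invariant under shifts by $\ett$, so the quotient may be restricted to $\ett^\perp$ where $A$ is positive definite, and the points where the denominator vanishes do not affect the infimum. Also, your self-contained derivation of $A\pickupl=-\dsel$ via the boundary-supported, $\ett$-orthogonal defect $c(\el-\er)$ pinned down by pairing with $\xfet$ is sound, and is essentially an economical repackaging of the paper's computation leading from \eqref{AG} to \eqref{Apickup}.
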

\begin{proof}Theorem~\ref{happ=1/qsnok} is proven in Appendix~\ref{happqsnok=1}.
\end{proof}
 A consequence of Theorem~\ref{happ=1/qsnok} is that the stability demands in \eqref{stabexpdual} can be written
\begin{align}\label{stabexpdualNEW}
\taulr \alr\leq0,&&\sigmalr\blr\leq 1/\qhattot,&&
\duallr^2\leq-4 \alr(\taulr/\qhattot +\sigmalr ),
\end{align}
with $\duallr$ from \eqref{delta}.
We will see that the penalty can be chosen such that we have energy stability and a singular discretization matrix at the same time:
From Assumption~\ref{remmuqu} we know that
the matrix $\OPsnok$ is singular when 
$\taulr=-\sigmalr\qhattot$.
Inserting this into 
\eqref{stabexpdualNEW}, the third stability demand becomes
$\duallr^2\leq0$,
which is only fulfilled if the penalty parameters are chosen in a dual consistent way. This means that if \eqref{discSCHEME} is an energy stable scheme, it must also be dual consistent to risk having a singular discretization matrix.
 Note though that even if the scheme is dual consistent, a singular discretization matrix is avoided by choosing $\taulr\neq-\sigmalr\qhattot$.
To be precise, simultaneous having
$\taulr=-\qhattot/(\blr\qhattot+\alr )$ and $\sigmalr=1 /(\blr\qhattot +\alr)$ should be avoided, since 
this particular 
choice makes $\duallr=0$, 
fulfills the stability demands but at the same time
 makes
$\OPsnok$ singular.

In Assumption~\ref{remmuqu},
one can argue that $\godtycklig=-1$ gives just as an equal 
penalty strength as $\godtycklig=1$, simplifying Corollary~\ref{CorGenInv} to
$\taulr =-\left(\qhatlr-|\qhatc|\right)\sigmalr$. 
However, these choices do not give energy stability and 
are therefore not  interesting for our further discussions. 
Besides,  $|\qhatc|$ tend to be very small so in practice it does not make much of a difference.

\subsection{Relations to the stability demands in \cite{ErikssonDual}}
\label{relationsErikssonDual}

In Section~\ref{stability2} the "borrowing technique" 
is used for deriving the stability restrictions on the penalty parameters. 
In \cite{ErikssonDual}, a different approach (inspired by \cite{HickenNo17,Berg20126846} where wide-stencil discretizations are rewritten as first order systems) is used for showing stability, and here we are going to comment on some connections between the two methods.

In \cite{ErikssonDual}, it is assumed that $A$ can be decomposed as in \cite{Carpenter1999341}, that is as
 \begin{align}\label{decompAS}
A=A^\trans=S^\trans MS,&&\dsel=S^\trans \el,&&\dser=S^\trans \er,
\end{align}
and the strategy for showing stability 
is 
 to modify the approximation of $\scu_x$ from $S\scv$ to the auxiliary variable $\helpvar=S\scv+M^{-1}\el\arbscall +M^{-1}\er\arbscalr$. In \cite{ErikssonDual}, $\arbscallr$ are penalty-like terms proportional to the solution deviations from boundary data, but other options are possible.
Computing $\helpvar^\trans M\helpvar$
makes the terms 
\begin{align*}
2\scv^\trans \dsel\arbscall +2\scv^\trans \dser\arbscalr +\ql\arbscall^2 +2\qc \arbscall \arbscalr+\qr\arbscalr^2 \leq2\scv^\trans (\dsel\arbscall+\dser\arbscalr) +\qtot(\arbscall^2 +\arbscalr^2 )
\end{align*} 
available to the boundary terms in \eqref{BeforeStabtricks}, 
where $\qlr$, $\qc$ and $\qtot$ 
are defined 
as
\begin{align}\label{qdefcompact}
 \qlr\equiv\elr^\trans M^{-1}\elr,&&\qc \equiv\el^\trans M^{-1}\er=\er^\trans M^{-1}\el, &&\qtot\equiv\qlr+|\qc |.
\end{align}
The "borrowing technique" on the other hand,
makes
the terms $
-h\app\scv^\trans (\dsel\dsel^\trans+\dser\dser^\trans)\scv$ available for the boundary terms in \eqref{BeforeStabtricks}.

Although these two approaches of showing stability are different, they are closely related. 
In Lemma~\ref{happ=1/q} we formalize this relation and show that $\qtot=1/(h\app)$.
\begin{lemma}\label{happ=1/q}Assume that $A$ in \eqref{SBPprop2} can be factorized as in \eqref{decompAS} with $M>0$, and define $\qtot$ as stated in \eqref{qdefcompact}.
Next, consider \eqref{BorrowProp}, where the parameter $\app$ is defined as the maximum number such that $\Atilde\geq0$ still holds.
Then it holds that $h\app=1/\qtot$.
\end{lemma}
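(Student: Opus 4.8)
The plan is to exploit the factorization \eqref{decompAS} to turn the matrix inequality $\Atilde\geq0$ into a statement about the small matrix $M$, and then read off the borrowing threshold from a $2\times2$ eigenvalue problem. Writing $E=[\el\ \er]$ and substituting $A=S^\trans M S$, $\dsel=S^\trans\el$, $\dser=S^\trans\er$ into \eqref{BorrowProp} gives
\begin{align*}
\Atilde = A-h\app(\dsel\dsel^\trans+\dser\dser^\trans)=S^\trans\!\left(M-h\app\,EE^\trans\right)\!S.
\end{align*}
Setting $N\equiv M-h\app\,EE^\trans$, one direction is immediate: if $N\geq0$ then $\Atilde=S^\trans N S\geq0$. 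Hence the largest admissible $h\app$ is at least the value at which $N$ loses positive semi-definiteness.

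That value is a $2\times2$ computation. Since $M>0$, we have $N\geq0$ if and only if $I-h\app\,M^{-1/2}EE^\trans M^{-1/2}\geq0$, i.e. if and only if $h\app\,\lambda_{\max}(E^\trans M^{-1}E)\leq1$. With the notation of \eqref{qdefcompact}, $E^\trans M^{-1}E=\left[\begin{smallmatrix}\qlr&\qc\\\qc&\qlr\end{smallmatrix}\right]$ (using equivalent boundary closures, so that $\ql=\qr$), whose largest eigenvalue is $\qlr+|\qc|=\qtot$. Thus $N\geq0$ exactly when $h\app\leq1/\qtot$, which combined with the previous paragraph shows $h\app\geq1/\qtot$ for the maximal $\app$.

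It remains to prove that one cannot borrow more, i.e. that $h\app>1/\qtot$ forces $\Atilde\not\geq0$. This is the crux, and the only place where the detailed structure of the operator enters. At $h\app=1/\qtot$ the matrix $N$ is singular with null direction $\helpvar_0=M^{-1}Ec$, where $c$ is the top eigenvector of $E^\trans M^{-1}E$; concretely $\helpvar_0=M^{-1}(\el\pm\er)$, with the sign determined by $\qc$. If $\helpvar_0\in\mathcal{R}(S)$, then picking $\scv$ with $S\scv=\helpvar_0$ yields $\scv^\trans\Atilde\scv=\helpvar_0^\trans N\helpvar_0=0$ while $E^\trans\helpvar_0\neq0$, so any further increase of $\app$ renders this quadratic form negative, giving $h\app\leq1/\qtot$ and hence equality.

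The main obstacle is precisely verifying $\helpvar_0\in\mathcal{R}(S)$. Because $S\ett=0$ by consistency, the range of $S$ is only a hyperplane, so this membership is not automatic and must use the SBP structure. I would establish it by identifying the left null vector $v^{\ast}$ of $S$ (note that the first and last rows of $S$ are exactly $\dsel^\trans$ and $\dser^\trans$, since $\dsel=S^\trans\el$ and $\dser=S^\trans\er$) and checking $\el^\trans M^{-1}v^{\ast}=\er^\trans M^{-1}v^{\ast}=0$, equivalently $M^{-1}\el,\,M^{-1}\er\perp v^{\ast}$; this places $\helpvar_0$ in $\mathcal{R}(S)=(v^{\ast})^\perp$. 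Equivalently, one can run the auxiliary-variable identity $\helpvar^\trans M\helpvar=\scv^\trans A\scv+2\scv^\trans(\dsel\arbscall+\dser\arbscalr)+\ql\arbscall^2+2\qc\arbscall\arbscalr+\qr\arbscalr^2$ in reverse, setting $\helpvar=0$ at the optimal $(\arbscall,\arbscalr)$: solvability of $S\scv=-M^{-1}(\el\arbscall+\er\arbscalr)$ is the same range condition and forces the borrowing bound to be attained, which is the heart of the argument.
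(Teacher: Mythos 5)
Your route is genuinely different from the paper's. The paper introduces the auxiliary variable $\helpvar=S\scv+M^{-1}\el\arbscall+M^{-1}\er\arbscalr$, expands $\helpvar^\trans M\helpvar\geq0$, restricts $\arbscallr$ to an ansatz linear in $\dsel^\trans\scv$ and $\dser^\trans\scv$, and minimizes the borrowable coefficient over that ansatz via an explicit elliptic parametrization. Your congruence $\Atilde=S^\trans(M-h\app\,EE^\trans)S$ with $E=[\el\ \er]$, followed by the reduction to $\lambda_{\max}(E^\trans M^{-1}E)=\ql+|\qc|=\qtot$, is the Schur-complement version of that same computation, and it delivers $\scv^\trans A\scv\geq\frac{1}{\qtot}\scv^\trans(\dsel\dsel^\trans+\dser\dser^\trans)\scv$, i.e.\ $h\app\geq1/\qtot$, in a few lines where the paper needs a page of trigonometric bookkeeping. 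Note that this lower bound is also all that the paper's own proof rigorously establishes: its minimization is carried out only within the chosen ansatz family, and the converse (that one cannot borrow more) is asserted rather than proved. So on the substantive direction you are not behind the paper; you are simply more explicit about where the remaining work lies.

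On the converse itself, however, the specific verification you propose would fail. You suggest checking $\el^\trans M^{-1}v^{\ast}=\er^\trans M^{-1}v^{\ast}=0$ for the left null vector $v^{\ast}$ of $S$, which would place both $M^{-1}\el$ and $M^{-1}\er$ in $\mathcal{R}(S)$. Take the wide-stencil case $S=D_1$, $M=\PH$ (the case the lemma's hypothesis $M>0$ is actually aimed at): there $S^\trans v^{\ast}=Q^\trans(\PH^{-1}v^{\ast})=0$ is solved by $v^{\ast}=\PH w$ with the alternating vector $w=[1,-1,1,-1,\dots]^\trans$, and then $\el^\trans M^{-1}v^{\ast}=\el^\trans w=1\neq0$, so $M^{-1}\el\notin\mathcal{R}(S)$. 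What saves the argument is that only the particular null combination $M^{-1}(\el\arbscall^{\ast}+\er\arbscalr^{\ast})$ of $N$ needs to lie in $\mathcal{R}(S)$: in this example $\qc=0$, the top eigenspace of $E^\trans M^{-1}E$ is two-dimensional, and $(\el-\er)^\trans w=1-(-1)^{\ngpts}$ vanishes for even $\ngpts$ while $(\el+\er)^\trans w$ vanishes for odd $\ngpts$, so an admissible critical direction is always in the range. Your closing remark about solvability of $S\scv=-M^{-1}(\el\arbscall+\er\arbscalr)$ is the correct condition; the stronger componentwise check should be dropped, and the range condition still has to be verified operator by operator (or from additional structural assumptions) before the equality $h\app=1/\qtot$, rather than just the inequality, can be claimed.
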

\begin{proof}Lemma~\ref{happ=1/q} is proven in Appendix~\ref{happq=1}.
\end{proof}

For wide-stencil operators, $S=D_1$ and $M=\PH$ in \eqref{decompAS},
and the parameters $\qlr$ and $\qc$ in \eqref{qdefcompact}
 are easily obtained 
 since $M$ 
 is known.
For narrow-stencil operators on the other hand, $M$ and the interior of $S$ are not uniquely defined.
In \cite{ErikssonDual}, the strategy was (under the contrary 
 assumption that $S$ is non-singular and $M$ is singular) to 
 compute 
\begin{align}\label{qsnokdef}
 \qsnoklr\equiv\elr^\trans \Msnok^{-1}\elr, &&\qsnokc\equiv\el^\trans \Msnok^{-1}\er=\er^\trans \Msnok^{-1}\el,&&\qsnoktot\equiv\qsnoklr+|\qsnokc|
\end{align}
instead,
where 
$\Msnok\equiv S^{-\trans}(A+\pert \el\el^\trans)S^{-1}$
 with $\pert\neq0$ being a perturbation parameter. 
For 
 wide-stencil
 operators though, it can easily be checked numerically that $\qlr\neq\qsnoklr$ and $\qc\neq\qsnokc$.
 This is somewhat alarming, but it can as easily be checked that it still holds that $\qtot=\qsnoktot$.
 We confirm this analytically in Theorem~\ref{GrandFinale} below, and the use of $\qsnoktot$ in \cite{ErikssonDual} is thus justified. First though, we note the following:
\begin{lemma}\label{qsnoksame}
The quantities $\qsnoklr$ and $\qsnokc$ defined in \eqref{qsnokdef} are identical to the quantities $\qhatlr$ and $\qhatc$ in \eqref{qsnokdefALT}.
\end{lemma}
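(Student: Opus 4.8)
The plan is to peel off the factorisation \eqref{decompAS} so that the spectral quantities in \eqref{qsnokdef} become bilinear forms built directly from $A$. Writing $B\equiv A+\pert\el\el^\trans$ and using $\Msnok=S^{-\trans}BS^{-1}$ with $S$ invertible gives $\Msnok^{-1}=S\,B^{-1}S^\trans$, and since $S^\trans\el=\dsel$ and $S^\trans\er=\dser$ this collapses \eqref{qsnokdef} to
\begin{align*}
\qsnokl=\dsel^\trans B^{-1}\dsel,\quad \qsnokr=\dser^\trans B^{-1}\dser,\quad \qsnokc=\dsel^\trans B^{-1}\dser .
\end{align*}
Both the perturbation $\pert$ and the non-unique factor $S$ have now disappeared, and the whole statement reduces to understanding how $B^{-1}$ acts on the boundary stencils $\dsel$ and $\dser$.

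Next I would record two elementary facts about $\zl\equiv B^{-1}\dsel$ and $\zr\equiv B^{-1}\dser$, using the consistency relations \eqref{Sconsistent} and \eqref{AxA1-x}, which give $A\ett=\noll$ and $A\xfet=\er-\el$. Pairing $B\zl=\dsel$ with $\ett$ (so that $\ett^\trans B=\pert\el^\trans$) together with $\dsel^\trans\ett=0$ forces $\el^\trans\zl=0$ (recall $\pert\neq0$), hence $A\zl=\dsel$. Pairing $A\zl=\dsel$ with $\xfet$ and using $\dsel^\trans\xfet=1$ then yields $\er^\trans\zl=1$. The same identities $\el^\trans\zr=0$, $\er^\trans\zr=1$ hold for $\zr$, and these endpoint values are the only information about $\zl,\zr$ the argument will use.

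The core step is to evaluate $\zl^\trans A\pickupl$ in two ways. On one side $\zl^\trans A\pickupl=(A\zl)^\trans\pickupl=\dsel^\trans\pickupl=-\qhatl$ by \eqref{qsnokdefALT}. On the other side, substituting $\pickupl=\ett-\xfet/\domsize-\GreenDisc\dsel$ and using $A\ett=\noll$, $A\xfet=\er-\el$ gives $A\pickupl=(\el-\er)/\domsize-A\GreenDisc\dsel$; since $\GreenDisc$ in \eqref{InverseParts} inverts precisely the interior block $\Amid$, the residual $A\GreenDisc\dsel-\dsel$ lives only on the two boundary nodes. Contracting with $\zl$ and inserting $\el^\trans\zl=0$, $\er^\trans\zl=1$ leaves $\zl^\trans A\pickupl=-1/\domsize-\qsnokl-(A\GreenDisc\dsel-\dsel)_\ngpts$. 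Equating the two evaluations turns $\qsnokl=\qhatl$ into the single scalar claim $(A\GreenDisc\dsel-\dsel)_\ngpts=-1/\domsize$; the identities $\qsnokr=\qhatr$ and $\qsnokc=\qhatc$ follow by pairing $\zr$ against $A\pickupr$ and against $A\pickupl$ respectively, each time reducing to the same residual evaluated at node $\ngpts$.

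The main obstacle, and the only point where genuine computation enters, is establishing that this boundary residual equals $-1/\domsize$ for each of the stencils $\dsel$ and $\dser$. I would read off the interior block of $A\xfet=\er-\el$, which with $x_0=0$ and $x_\ngpts=\domsize$ states $\Amid\xfet_{\mathrm{int}}=-\domsize\,\Aeast$ (where $\xfet_{\mathrm{int}}$ is the interior subvector of $\xfet$), whence $\Amid^{-1}\Aeast=-\xfet_{\mathrm{int}}/\domsize$; combining this with $\dsel^\trans\xfet=1$ and the endpoint values of $\xfet$ pins the residual to $-1/\domsize$ regardless of the interior support of the stencil, the endpoint component of the stencil cancelling exactly. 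I expect the careful bookkeeping of which boundary node contributes — arranged so that only $\er^\trans\zl=1$ survives — to be the delicate part, but no estimate or genuine analysis is required, and centrosymmetry is never invoked, so all three identities follow uniformly.
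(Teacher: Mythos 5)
Your proof is correct, and it takes a genuinely different route from the paper's. The paper starts from the identity \eqref{eqqsnok}, imported from \cite{ErikssonDual}, which expresses $\qsnokall$ through the zero-padded inverse $K_0$ of the lower-right block of $A$, and then verifies $K_0=\GreenDisc+\xfet\xfet^\trans/\domsize$ by the block-matrix inversion formula together with \eqref{Bonus}. You instead work directly from the definition \eqref{qsnokdef}: the congruence $\Msnok^{-1}=S(A+\pert\el\el^\trans)^{-1}S^\trans$ together with $S^\trans\elr=\dselr$ removes both $S$ and $\pert$, and the vectors $\zlr=(A+\pert\el\el^\trans)^{-1}\dselr$ are then characterized by $A\zlr=\dselr$, $\el^\trans\zlr=0$, $\er^\trans\zlr=1$; pairing these against $A\pickuplr$ and using the symmetry of $A$ gives the claim. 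This buys a self-contained argument (no appeal to the intermediate formula from \cite{ErikssonDual}) and makes transparent why the result is independent of the perturbation $\pert$ and of the non-unique factor $S$. Your only inefficiency is that the "delicate" residual computation $(A\GreenDisc\dsel-\dsel)_\ngpts=-1/\domsize$ re-derives what the paper already records as \eqref{AG} and \eqref{Apickup}; citing $A\pickupl=-\dsel$ and $A\pickupr=\dser$ from \eqref{Apickup} collapses your core step to one line, e.g. $\qsnokl=\dsel^\trans\zl=-\pickupl^\trans A\zl=-\pickupl^\trans\dsel=\qhatl$. All hypotheses you use ($S$ invertible, $\Msnok$ invertible, $A$ symmetric, the consistency relations \eqref{Sconsistent} and \eqref{AxA1-x}) are exactly those in force for the narrow-stencil setting of Lemma~\ref{qsnoksame}, and, like the paper, you correctly avoid invoking centrosymmetry.
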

 \begin{proof}Lemma~\ref{qsnoksame} is proven in Appendix~\ref{partII}.
\end{proof}

Thus, in summary, 
we have that:
\begin{theorem}
\label{GrandFinale}
Assume that $A$ in \eqref{SBPprop2} can be factorized as in \eqref{decompAS} with $M>0$, and define $\qtot$ as stated in \eqref{qdefcompact}.
Next, assume that $M$ is singular instead, with $M\geq0$, and define $\qsnoktot$ as stated in \eqref{qsnokdef}. Then it holds that $\qtot=\qsnoktot$.
\end{theorem}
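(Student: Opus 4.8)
The plan is to read Theorem~\ref{GrandFinale} as a direct assembly of the three preceding results --- Lemma~\ref{happ=1/q}, Theorem~\ref{happ=1/qsnok} and Lemma~\ref{qsnoksame} --- bridged by the single factorization-independent number $\app$ together with its reciprocal companion $\qhattot$. The crucial observation is that the borrowing parameter $\app$ in \eqref{BorrowProp} is intrinsic to $A$ and the stencils $\dselr$: it is the maximum amount of positivity that can be borrowed from $A$ while keeping $\Atilde\geq0$, and this is a property of the matrix $A$ alone that makes no reference to any factorization $A=S^\trans MS$. The same is true of $\qhattot$, which is built from the discrete Green's function through $\qhatlr$ and $\qhatc$ in \eqref{qsnokdefALT}. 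These two intrinsic quantities are exactly what let the two differently-defined totals $\qtot$ and $\qsnoktot$ meet.

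First I would apply Lemma~\ref{happ=1/q}. Under the hypothesis $M>0$ it gives $h\app=1/\qtot$, that is $\qtot=1/(h\app)$, with $\qtot$ as defined in \eqref{qdefcompact}. Next I would apply Theorem~\ref{happ=1/qsnok}, which asserts $h\app=1/\qhattot$ for the very same intrinsic $\app$, and hence $\qhattot=1/(h\app)$. Equating the two reciprocals immediately delivers $\qtot=\qhattot$.

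It then remains only to connect $\qsnoktot$ to $\qhattot$, and this is precisely what Lemma~\ref{qsnoksame} supplies: it identifies the perturbation-based quantities of \eqref{qsnokdef} with the Green's-function quantities of \eqref{qsnokdefALT} termwise, $\qsnoklr=\qhatlr$ and $\qsnokc=\qhatc$. Substituting these into the definition $\qsnoktot\equiv\qsnoklr+|\qsnokc|$ from \eqref{qsnokdef} yields $\qsnoktot=\qhatlr+|\qhatc|=\qhattot$. Chaining the three equalities together produces the claim, $\qtot=\qhattot=\qsnoktot$.

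The proof is therefore essentially bookkeeping, and the one point I would flag as the genuine obstacle is justifying that the symbol $\app$ appearing in Lemma~\ref{happ=1/q} and in Theorem~\ref{happ=1/qsnok} really denotes one and the same number. Lemma~\ref{happ=1/q} arrives at $\app$ through a factorization with $M>0$, whereas Theorem~\ref{happ=1/qsnok} never mentions $M$ at all; the reconciliation rests entirely on reading the definition of $\app$ in \eqref{BorrowProp} as the maximal borrowable positivity of $A$, independent of how (or whether) $A$ is decomposed. Once this is granted, nothing further is required --- in particular the termwise discrepancies $\qlr\neq\qsnoklr$ and $\qc\neq\qsnokc$ noted for wide-stencil operators never have to be resolved, since only the totals are asserted to agree and both totals collapse onto the common value $1/(h\app)$.
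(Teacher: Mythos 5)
Your proposal is correct and follows exactly the paper's own argument: chain Lemma~\ref{happ=1/q} ($\qtot=1/(h\app)$), Theorem~\ref{happ=1/qsnok} ($1/(h\app)=\qhattot$), and Lemma~\ref{qsnoksame} together with the definitions \eqref{qsnokdef} and \eqref{qhat} ($\qhattot=\qsnoktot$). Your added remark that $\app$ is intrinsic to $A$ and $\dselr$ (independent of any factorization) is a fair and correct clarification of a point the paper leaves implicit.
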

\begin{proof}
From Lemma~\ref{happ=1/q} we have that $\qtot=1/(h\app)$ and 
from Theorem~\ref{happ=1/qsnok} we have that 
$1/(h\app)=\qhattot$.
Combining Lemma~\ref{qsnoksame} with the definitions in \eqref{qsnokdef} and \eqref{qhat} we deduce that 
$\qhattot=\qsnoktot$. All in all, this gives
$\qtot=1/(h\app)=\qhattot=\qsnoktot$
concluding the proof.
\end{proof}

For an example, see the derived values of $\qsnokall$ and $\qall$ for the wide-stencil (2,0) order operator in Appendix~\ref{AppWide}.
 As a numerical confirmation, in Table~\ref{qcomp} we compare the values of 
 $h\qsnoktot $
 from \cite{ErikssonDual} to the values of $\app$ computed in \cite{MATTSSON20088753,WangKreiss2017}. 
In Table~\ref{qcomp} though, it appears 
that 
 $h\qsnoktot\geq1/\app$. %
This is because the listed $\app$ 
are computed for $\ngpts\to\infty$, 
and are as such slightly too large
for very coarse meshes.

\begin{table}[h]
\centering
\begin{tabular}{|c|ll|l|}\hline
{Order}&$h\qsnoktot $ from \cite{ErikssonDual}&&$1/\app$ from \cite{MATTSSON20088753,WangKreiss2017}\\\hline
(2,0)&1&&--\\
(2,1)&\underline{2.5}& & \underline{2.5} \\
(4,2)&\underline{3.9863}91480987749 &(for $\ngpts=8$)& \underline{3.9863}50339 \\
(6,3)&\underline{5.322}804652661742 &(for $\ngpts=12$)& \underline{5.322}787044\\
(8,4)&\underline{633.6}9326893357 &(for $\ngpts=16$)& \underline{633.6}2285\\
(10,5)&--&&28.4736205\\\hline
\end{tabular} 
\caption{
The borrowing parameter $\app$ computed in \cite{MATTSSON20088753,WangKreiss2017}, for narrow-stencil second derivative operators from \cite{Mattsson2004503,MATTSSON2013418}. In comparison the $\qsnoktot$-values (scaled with $h$) from \cite{ErikssonDual}.
}
\label{qcomp}
\end{table}

\section{Conclusions}
\label{Summary}

We discretize the scalar advection equation and the heat equation in one-dimensional space, using the SBP-SAT finite difference method. This gives rise to two semi-discrete schemes of the form $\scv_t+\lindisc\scv=\Fsnok$, where the discretization matrix $\lindisc$ is approximating either the first derivative or the second derivative, including treatment of the boundary conditions. The matrix $\lindisc$ is, due to properties of the SBP-SAT method, associated with a positive definite matrix $\PH$ such that $\lindisc=\PH^{-1}\genOPsnok$, where the inverse of $\genOPsnok$ is interpreted as a discrete Green's function. 
We derive the general forms of these inverses, and provide explicit examples of $\genOPsnok^{-1}$ for some operators $\lindisc$ of second and fourth order accuracy. 

The boundary treatment SAT induces free parameters in $\lindisc$.
We first determine these parameters such that the semi-discrete schemes are energy stable. Any remaining degrees of freedom can be used to make the schemes dual consistent.
Another important question is whether the discretization matrices $\lindisc$ are invertible.
Conveniently, the formula for $\genOPsnok^{-1}$ reveals precisely which combinations of SAT parameters that make
$\lindisc$ singular. 


In the second derivative case, it turns out that for one very particular choice of SAT parameters, $\lindisc$ can become singular even when the scheme is energy stable. Here, we can avoid this and instead choose the parameters such that the scheme is energy stable, dual consistent and guaranteed to have an invertible discretization matrix (and consequently a unique solution). However, for more complex problems it might not be feasible to {\it prove} that the discretization matrix is invertible, not even for energy stable schemes.

Last, we take a look at two supposedly different approaches of proving energy stability. Curiously, they are closely related, leading to the same demands on the SAT parameters.

\subsection*{Acknowledgements}

I would like to thank Jan Nordstr\"{o}m and Anna Nissen for inspiration and encouragement in early discussions about this work.

\appendix

\section{Explicit inverses of the first derivative operator}

\subsection{The (2,1) order accurate operator}
\label{InversesExamples1}

In the second order case, we have
\begin{align}\label{D1Qsnok}\scalebox{.96}{$
D_1=$}\frac{1}{h}\scalebox{.96}{$\left[\begin{array}{cccccc}-1&1\\-\frac{1}{2}&0&\frac{1}{2}\\&-\frac{1}{2}&0&\frac{1}{2}\\&&\ddots&\ddots&\ddots\\&&&-\frac{1}{2}&0&\frac{1}{2}\\&&&&-1&1\end{array}\right],\hspace{4pt}
\Qsnok=\left[\begin{array}{cccccc}-\frac{1}{2}-\taul&\frac{1}{2}\\-\frac{1}{2}&0&\frac{1}{2}\\&-\frac{1}{2}&0&\frac{1}{2}\\&&\ddots&\ddots&\ddots\\&&&-\frac{1}{2}&0&\frac{1}{2}\\&&&&-\frac{1}{2}&\frac{1}{2}\end{array}\right]$}
\end{align}
with the associated norm-matrix $\PH =h\ \text{diag}\left(\begin{array}{ccccccc}\frac{1}{2},& 1,& 1,& \hdots,& 1,& 1,& \frac{1}{2}\end{array}\right)$.
In \eqref{D1Qsnok}, we identify $\qvec^\trans=\left[\begin{array}{ccccc}\frac{1}{2}&0&\hdots&0&0\end{array}\right]$ and $\Qbar$ (given below) according to \eqref{Qparts}.
Using Gauss--Jordan
elimination we find the inverse of
$\Qbar$, as
\begin{align*}
\Qbar=\frac{1}{2}\left[\begin{array}{ccccccc}0&1\\-1&0&1\\&-1&0&1\\&&\ddots&\ddots&\ddots\\&&&-1&0&1\\&&&&-1&1\end{array}\right]&&
\Longrightarrow&&\Qbar^{-1}=2\left[\begin{array}{cccccc}1&-1&1&-1&1&\cdots\\
1&0&0&0&0&\cdots\\
1&0&1&-1&1&\cdots\\
1&0&1&0&0&\cdots\\
1&0&1&0&1&\cdots\\
\vdots&\vdots&\vdots&\vdots&\vdots&\ddots\end{array}\right].
\end{align*}
We compute $\qvec^\trans\Qbar^{-1}=\left[\begin{array}{cccccc}1&-1&1&-1&1&\cdots\end{array}\right]$ as well.
Inserting these results into \eqref{QsnokInv} and \eqref{GreenEtc} yields 
\begin{align}\label{Qinv2}
\Qsnok^{-1}=2\left[\begin{array}{ccccccc}0&0&0&0&0&0&\cdots\\
0&1&-1&1&-1&1&\cdots\\
0&1&0&0&0&0&\cdots\\
0&1&0&1&-1&1&\cdots\\
0&1&0&1&0&0&\cdots\\
0&1&0&1&0&1&\cdots\\
\vdots&\vdots&\vdots&\vdots&\vdots&\vdots&\ddots\end{array}\right]-\frac{1}{\taul}\left[\begin{array}{ccccccc}1&-1&1&-1&1&-1&\cdots\\
1&-1&1&-1&1&-1&\cdots\\
1&-1&1&-1&1&-1&\cdots\\
1&-1&1&-1&1&-1&\cdots\\
1&-1&1&-1&1&-1&\cdots\\
1&-1&1&-1&1&-1&\cdots\\
\vdots&\vdots&\vdots&\vdots&\vdots&\vdots&\ddots\end{array}\right],
\end{align}
which we recognize from \cite{Eriksson20092659}.

\subsection{The (4,2) order accurate operator}
\label{InversesExamples14}

In \cite{ref:STRA94}, we find $D_1$ with fourth order interior accuracy and the associated $\PH $.
Together with \eqref{SBPprop1} and \eqref{Qsnok}, this gives us
\begin{align}
\label{Qtildeinv4}
\Qsnok=
\left[\begin{array}{ccccccccccc}
-\frac{1}{2}-\taul & \frac{59}{96} & -\frac{1}{12} & -\frac{1}{32} & 0 & 0&0&0&0 &0&0 \\
 -\frac{59}{96} & 0 & \frac{59}{96} & 0 & 0 & 0 &0&0&0&0&0 \\
 \frac{1}{12} & -\frac{59}{96} & 0 & \frac{59}{96} & -\frac{1}{12} & 0 &0&0&0&0&0 \\
 \frac{1}{32} & 0 & -\frac{59}{96} & 0 & \frac{2}{3} & -\frac{1}{12} &0&0&0&0&0 \\
 0 & 0 & \frac{1}{12} & -\frac{2}{3} & 0 & \frac{2}{3} & -\frac{1}{12} &0&0&0&0\\
 \vdots & \vdots & & \ddots & \ddots & \ddots & \ddots& \ddots&&\vdots&\vdots \\
0 & 0 & 0 & 0 & \frac{1}{12} & -\frac{2}{3} & 0 & \frac{2}{3} & -\frac{1}{12} & 0 & 0\\
0 & 0 & 0 & 0 & 0 & \frac{1}{12} & -\frac{2}{3} & 0 & \frac{59}{96} & 0 & -\frac{1}{32}\\
0 & 0 & 0 & 0 & 0 & 0 & \frac{1}{12} & -\frac{59}{96} & 0 & \frac{59}{96} & -\frac{1}{12}\\
0 & 0 & 0 & 0 & 0 & 0 & 0 & 0 & -\frac{59}{96} & 0 & \frac{59}{96}\\
0 & 0 & 0 & 0 & 0 & 0 & 0 & \frac{1}{32} & \frac{1}{12} & -\frac{59}{96} & \frac{1}{2}\\
 \end{array}\right].
 \end{align}
 We identify $\Qbar$ and $ \qvec$ as indicated in \eqref{Qparts}.
We are now looking for a matrix $\Qinvmid$ such that
$\Qbar\Qinvmid=\Ibar$. Let $\Qinvmid$ be composed as
\begin{align*}
\Qinvmid
 &=\left[\begin{array}{ccccc} \Qinvvec_{1}&\Qinvvec_{2}&\hdots&\Qinvvec_{\ngpts} \end{array}\right]
 ,&& \Qinvvec_{j}=\left[\begin{array}{ccccc} \Qinvele_{1,j}&\Qinvele_{2,j}&\hdots&\Qinvele_{\ngpts,j} \end{array}\right]^\trans.
\end{align*}
For $\Qbar\Qinvmid=\Ibar$ to hold, 
$\Qbar\Qinvvec_j=\evec_j$ must be fulfilled for all $j=1,2,\hdots,\ngpts$, 
where the $\ngpts\times1$ vector $\evec_j=[0, \hdots, 0, 1, 0, \hdots, 0]^\trans$ is non-zero only in its $j$th element.
For $\Qbar\Qinvvec_j=\evec_j$ to be fulfilled, the interior rows lead to $\Qinvele_{i-2,j}-8\Qinvele_{i-1,j}+8\Qinvele_{i+1,j}- \Qinvele_{i+2,j} =12\delta_{i,j}$,
where $\delta_{i,j}$ is the Kronecker delta. 
Hence, the fourth order linear homogeneous recurrence relation $\Qinvele_{i-2,j} -8\Qinvele_{i-1,j}+8\Qinvele_{i+1,j}-\Qinvele_{i+2,j}=0$ has to be fulfilled by most $\Qinvele_{i,j}$. The general, explicit solution to this recursive relation has the form $\Qinvele_{i,j}=\cc{j}+\ck{j}(-1)^i+\ct{j}\rtre^i+\cf{j}\rtre^{-i}$, where $\rtre=4+\sqrt{15}\approx 7.873$ and where $\call$ are $j$-dependent constants.

The requirement $\Qbar\Qinvvec_j=\evec_j$ takes slightly different forms depending on $j$. 
For $j=1$, 
we have $\Qbar\Qinvvec_1=\evec_1$, which is expressed explicitly as
\begin{align}\label{firstcolumn}\frac{1}{96}
\left[\begin{array}{c}
59\Qinvele_{2,1} \\
 -59\Qinvele_{1,1}+59\Qinvele_{3,1}-8\Qinvele_{4,1} \\
 -59\Qinvele_{2,1}+64\Qinvele_{4,1}-8\Qinvele_{5,1} \\
 8\left(\Qinvele_{2,1} - 8\Qinvele_{3,1}+8\Qinvele_{5,1}- \Qinvele_{6,1} \right) \\
 \vdots\\
 8\left(\Qinvele_{i-2,1} - 8\Qinvele_{i-1,1}+8\Qinvele_{i+1,1}- \Qinvele_{i+2,1} \right) \\
\vdots\\8\left(\Qinvele_{\ngpts-6,1}-8\Qinvele_{\ngpts-5,1}+8\Qinvele_{\ngpts-3,1}- \Qinvele_{\ngpts-2,1} \right)\\
8\Qinvele_{\ngpts-5,1} -64\Qinvele_{\ngpts-4,1}+59\Qinvele_{\ngpts-2,1}-3\Qinvele_{\ngpts,1}\\
8\Qinvele_{\ngpts-4,1} -59\Qinvele_{\ngpts-3,1}+ 59\Qinvele_{\ngpts-1,1}-8\Qinvele_{\ngpts,1}\\
 -59\Qinvele_{\ngpts-2,1}+ 59\Qinvele_{\ngpts,1}\\
 3\Qinvele_{\ngpts-3,1} +8\Qinvele_{\ngpts-2,1}- 59\Qinvele_{\ngpts-1,1}+48\Qinvele_{\ngpts,1}
\end{array}\right]=\left[\begin{array}{c}1\\0\\0\\
0\\\vdots\\0\\\vdots\\0\\
0\\0\\0\\0\end{array}\right].
\end{align}
The ansatz $\Qinvele_{i,1}=\cc{j}+\ck{j}(-1)^i+\ct{j}\rtre^i+\cf{j}\rtre^{-i}$ holds for $2\leq i\leq \ngpts-2$ where $\call$ are unknowns to be determined. In addition, we have the three unknowns $\Qinvele_{1,1}$, $\Qinvele_{\ngpts-1,1}$ and $\Qinvele_{\ngpts,1}$.
The three first and the four last rows in 
\eqref{firstcolumn} 
 gives us seven conditions. 
Inserting the above mentioned expressions for $\Qinvele_{i,1}$ into \eqref{firstcolumn}, gives a linear system with seven unknowns and seven conditions, as
\begin{align*}
\left[\begin{array}{ccccccc|c}0&59&59&59\rtre^2&59\rtre^{-2} &0&0&96\\ 
-59&51&-67&59\rtre^3-8\rtre^4&59\rtre^{-3}-8\rtre^{-4} &0&0&0\\ 
0&-3&13&-59\rtre^2+8\rtre^{3}&-59\rtre^{-2}+8\rtre^{-3}&0&0 &0\\
0&3&-13(-1)^{\ngpts}&\rtre^{\ngpts}(-8\rtre^{-3}+59\rtre^{-2})&\rtre^{-\ngpts}(-8\rtre^{3}+59\rtre^{2})&0&-3&0\\
0&-51&67(-1)^{\ngpts}&\rtre^{\ngpts}(8\rtre^{-4}-59\rtre^{-3})&\rtre^{-\ngpts}(8\rtre^{4}-59\rtre^{3})& 59&-8&0\\ 
0&-59&-59(-1)^{\ngpts}&-59\rtre^{\ngpts-2}&-59\rtre^{2-\ngpts}&0& 59&0\\ 
0&11&5(-1)^{\ngpts}&\rtre^{\ngpts}(3\rtre^{-3}+8\rtre^{-2})&\rtre^{-\ngpts}(3\rtre^{3}+8\rtre^{2})&- 59&48&0\end{array}\right]
\end{align*}
with the unknowns sorted as
$\Qinvele_{1,1}$, $\cc{j}$, $\ck{j}$, $\ct{j}$, $\cf{j}$, $\Qinvele_{\ngpts-1,1}$ and $\Qinvele_{\ngpts,1}$, 
and where we have used the relation $\rtre+\rtre^{-1} =8$ to simplify the expressions.

\subsubsection{The inverse 
with an even number of grid points $\ngpts$}

To make the expressions manageable, we simplify by assuming that $\ngpts$ is an even number. 
In this particular case, when solving the $7\times7$ system above, we obtain 
\begin{align*}
\Qinvele_{1,1}
=\frac{1}{2}\left(\frac{12\storserien_{\ngpts}}{59\nyttplus_{\ngpts}}\right)^2,&&
\Qinvele_{\ngpts-1,1}
=\frac{12}{59}\left(\frac{\storserien_{\ngpts}}{\nyttplus_{\ngpts}}-\frac{9}{59\nyttplus_{\ngpts}^2}\right),
&&
\Qinvele_{\ngpts,1}
=\frac{12\storserien_{\ngpts}}{59\nyttplus_{\ngpts}}.
\end{align*}
where $\storserien_{\ngpts}$ and $\nyttplus_{\ngpts}$ 
are integers given 
in \eqref{nyttplusmm} below.
Note that $\nyttplus_{\ngpts}\geq1$ for even $\ngpts$, so there is no risk of division by zero.
Moreover, 
 we obtain
 \begin{align*}
\cc{j}=\frac{12\storserien_{\ngpts}}{59\nyttplus_{\ngpts}},&&
\ck{j}=\frac{36}{590\nyttplus_{\ngpts}^2},&&
\ct{j}=\frac{6(\rtre-1)\rtre^{1-\ngpts}}{590\nyttplus_{\ngpts}^2},&&
\cf{j}=\frac{6(\rtre^{-1}-1)\rtre^{\ngpts-1}}{590\nyttplus_{\ngpts}^2},
\end{align*}
which
inserted
 into the ansatz $\Qinvele_{i,1}=\cc{j}+\ck{j}(-1)^i+\ct{j}\rtre^i+\cf{j}\rtre^{-i}$
leads to
\begin{align*}
\Qinvele_{i,1}
%
=\frac{12}{59}\left(\frac{\storserien_{\ngpts}}{\nyttplus_{\ngpts}}-\frac{3\modif_{\ngpts-i}}{\nyttplus_{\ngpts}^2}\right),&&\text{for } 2\leq i\leq \ngpts-2.
\end{align*}
The quantities $\modif_j$ are integers for integers $j$, and are specified below
\begin{align}\label{nyttplusmm}\begin{split}\nyttplus_{\ngpts}&=\frac{\grund_{\ngpts/2-1}+\grund_{\ngpts/2-2}}{10},\hspace{25.5pt} \storserien_{\ngpts}=\frac{9\grund_{\ngpts/2-1}+4\grund_{\ngpts/2-2}}{10} \\\modif_j&=\frac{\grund_{ j-1}-\grund_{ j-2} -6(-1)^{j}}{60},\hspace{8pt}\rotursym_j=\frac{\frac{1-(-1)^{j}}{2}\grund_{\ngpts/2-1}+\frac{1+(-1)^{j}}{2}\grund_{\ngpts/2-2}-\grund_{\ngpts/2-j}}{60}.\end{split} \end{align} 
where $\grund_j=\rtre^{j}+\rtre^{-j}$. 
For convenience, all the $\Qinvele_{i,1}$ presented above will be restated in
\eqref{udda}
and
\eqref{kolonner}, wherein we will also make use of $\rotursym_j$ defined above.


We use the same strategy for the other columns $j>1$.
For 
$2\leq j\leq \ngpts-2$, 
we need two different versions of
 the constants $\call$, depending on if we consider $\Qinvele_{i,j}$ for $i\leq j$ or for $i\geq j$.
We let 
 $\Qinvele_{i,j}=\ucc{j}+\uck{j} (-1)^i+\uct{j}\rtre^i+\ucf{j}\rtre^{-i}$ for 
$2\leq i\leq j\leq \ngpts-2$
 and $\Qinvele_{i,j}=\lcc{j}+\lck{j} (-1)^i+\lct{j}\rtre^i+\lcf{j}\rtre^{-i}$ for 
$2\leq j\leq i\leq \ngpts-2$.
Thus for every $2\leq j\leq \ngpts-2$, we have eight unknown constants, as well as the three remaining unknowns $ \Qinvele_{1,j} $, $ \Qinvele_{\ngpts-1,j}$ and $ \Qinvele_{\ngpts,j}$. The three first and the four last rows in %
the system above
 gives us seven conditions. From the rows $i=j-1,j,j+1$, we get three more conditions and in addition, we demand that the two versions of $\Qinvele_{j,j}$ are identical. 
 All in all, this gives a linear system with eleven unknowns
 $ \Qinvele_{1,j} $, $\ucc{j} $, $ \uck{j} $, $ \uct{j} $, $ \ucf{j} $, $ \lcc{j} $, $\lck{j} $, $ \lct{j} $, $\lcf{j} $, $ \Qinvele_{\ngpts-1,j} $ and $ \Qinvele_{\ngpts,j} $
 and eleven conditions.

We still
consider 
even numbers of $\ngpts$. 
Solving for the unknowns and inserting $\callu$ and $\calll$ into their respective ansatz, 
we eventually end up with $\Qinvele_{i,j}$ for the inner columns, presented below in \eqref{rader} and \eqref{mitten}. 
Furthermore, repeating the procedure for the last two columns, we obtain $\Qinvele_{i,j}$ for $j=\ngpts-1$ and $j=\ngpts$, given in \eqref{udda}
and
\eqref{kolonner}.
To simplify the expressions in \eqref{kolonner}, \eqref{rader} and \eqref{mitten}, we have used $\rotursym_j$ in \eqref{nyttplusmm}. 

In summary, when $\ngpts$ is even, the inverse of $\Qbar$ is given by 
$\Qbar^{-1}= (\Qinvele_{i,j})_{\ngpts\times\ngpts}$ with $\Qinvele_{i,j}$ as described in
\eqref{udda},
 \eqref{rader},
\eqref{kolonner} and
 \eqref{mitten} below. First, the corner elements are
\begin{align}\label{udda}
\Qinvele_{1,1}
&=\frac{72\storserien_{\ngpts}^2}{59^2\nyttplus_{\ngpts}^2},\hspace{18pt}
\Qinvele_{1,\ngpts-1}
=\frac{12}{59}\left(\frac{12\storserien_{\ngpts}^2+9}{59\nyttplus_{\ngpts}^2}-\frac{\storserien_{\ngpts}}{\nyttplus_{\ngpts}}\right),
\hspace{18pt}
\Qinvele_{1,\ngpts}
=-\frac{12\storserien_{\ngpts}}{59\nyttplus_{\ngpts}},\notag\\
\Qinvele_{\ngpts-1,1}
&=\frac{12}{59}\left(\frac{\storserien_{\ngpts}}{\nyttplus_{\ngpts}}-\frac{9}{59\nyttplus_{\ngpts}^2}\right),\hspace{15pt}\Qinvele_{\ngpts-1,\ngpts-1}
=\frac{72\storserien_{\ngpts}^2}{59^2\nyttplus_{\ngpts}^2},\hspace{15pt}\Qinvele_{\ngpts-1,\ngpts}
=-\frac{12\storserien_{\ngpts}}{59\nyttplus_{\ngpts}},
\\
\Qinvele_{\ngpts,1}
&=\frac{12\storserien_{\ngpts}}{59\nyttplus_{\ngpts}},\hspace{30pt}\Qinvele_{\ngpts,\ngpts-1}=\frac{12\storserien_{\ngpts}}{59\nyttplus_{\ngpts}},\hspace{30pt}\Qinvele_{\ngpts,\ngpts}=0.\notag
\end{align}
For $2\leq i\leq \ngpts-2$, we obtain
\begin{align}\label{kolonner}\begin{split}
\Qinvele_{i,1}
=\frac{12}{59}\left(\frac{\storserien_{\ngpts}}{\nyttplus_{\ngpts}}-\frac{3\modif_{\ngpts-i}}{\nyttplus_{\ngpts}^2}\right),
 \hspace{10pt}
\Qinvele_{i,\ngpts-1}= 36\frac{4\storserien_{\ngpts}\rotursym_i+\modif_{i}}{59\nyttplus_{\ngpts}^2}-\frac{12\rotursym_i}{\nyttplus_{\ngpts}},
\hspace{10pt}
\Qinvele_{i,\ngpts}
=\frac{-12\rotursym_i}{ \nyttplus_{\ngpts}}, 
 \end{split}
 \end{align}
while we for $2\leq j\leq \ngpts-2$ have
\begin{align}\label{rader}
 \Qinvele_{1,j} 
& = 36\frac{4\storserien_{\ngpts}\rotursym_j+\modif_{\ngpts-j}}{59\nyttplus_{\ngpts}^2}-\frac{12\storserien_{\ngpts}}{59\nyttplus_{\ngpts}},
 &&
 \Qinvele_{\ngpts-1,j}=
\frac{12\rotursym_j}{\nyttplus_{\ngpts}}-\frac{36\modif_{j}}{59\nyttplus_{\ngpts}^2},
 &&
 \Qinvele_{\ngpts,j} =\frac{12\rotursym_j}{\nyttplus_{\ngpts}}.
 \end{align}
Finally, the interior elements 
are
\begin{align}\label{mitten}\begin{split}
\Qinvele_{i,j}
&=12^2\frac{\rotursym_i \rotursym_j}{\nyttplus_{\ngpts}^2}-12\left(\frac{\rotursym_i}{ \nyttplus_{\ngpts}}-\frac{ \modif_i\modif_{ \ngpts-j}}{ \nyttplus_{\ngpts}^2} \right),\hspace{30pt}\text{for }2\leq i\leq j\leq \ngpts-2,
\\
 \Qinvele_{i,j}
 &=12\left(\frac{\rotursym_j}{ \nyttplus_{\ngpts}}-\frac{ \modif_j\modif_{ \ngpts-i}}{ \nyttplus_{\ngpts}^2} \right),\hspace{86pt}\text{for }2\leq j\leq i\leq \ngpts-2.
 \end{split}
 \end{align}
In the expressions above we have used $\nyttplus_{\ngpts}$, $\storserien_{\ngpts}$, $\modif_j$
and $\rotursym_j$ defined in \eqref{nyttplusmm}.
Next, we recall the structure in \eqref{Qparts}, and identify $\qvec$ in \eqref{Qtildeinv4} as
\begin{align*}
\qvec^\trans=
\left[\begin{array}{cccccccccc}
 \frac{59}{96} & -\frac{1}{12} & -\frac{1}{32} & 0 & 0&\cdots&0 \\
 \end{array}\right],
 \end{align*}
and compute
$\qvec^\trans\Qbar^{-1}$ as $(\qvec^\trans\Qbar^{-1})_j= \frac{59}{96}\Qinvele_{1,j} -\frac{1}{12} \Qinvele_{2,j}-\frac{1}{32}\Qinvele_{3,j}$. This gives
\begin{align}\label{qTQinv4}
(\qvec^\trans\Qbar^{-1})_j=\left\{\begin{array}{ll} \vspace{3pt}
\frac{12\storserien_{\ngpts}}{59\nyttplus_{\ngpts}}-1&\text{for }j=1\\\vspace{3pt}
\frac{ 12\rotursym_j}{\nyttplus_{\ngpts}}-1
&\text{for }j=2, \hdots,\ngpts-2\\\vspace{3pt}
\frac{12\storserien_{\ngpts}}{59\nyttplus_{\ngpts}}-1&\text{for }j=\ngpts-1\\
-1&\text{for }j=\ngpts,
\end{array}\right.
 \end{align}
where we have used 
the structures of $\Qinvele_{i,j}$ in \eqref{udda},
\eqref{kolonner},
 \eqref{rader} and
 \eqref{mitten},
together with the following relations:
\begin{align*}
\modif_2&=0,&\rotursym_2&=0,&\modif_{\ngpts-2}&=\frac{\storserien_{\ngpts}\nyttplus_{\ngpts}-8\nyttplus_{\ngpts}^2}{3}\\
\modif_3&=1,&\rotursym_3&=\frac{\storserien_{\ngpts}-8\nyttplus_{\ngpts}}{3},&
\modif_{\ngpts-3}&=\frac{-2\storserien_{\ngpts}^2+33\storserien_{\ngpts}\nyttplus_{\ngpts}-136\nyttplus_{\ngpts}^2}{3}.
\end{align*}
Inserting $\Qbar^{-1}$ from \eqref{udda},
\eqref{kolonner},
 \eqref{rader} and
 \eqref{mitten}, and \eqref{qTQinv4} into \eqref{QsnokInv} and \eqref{GreenEtc} yields the inverse of $\Qsnok$ in the (4,2) order accurate case (for $\ngpts$ even).

\section{Proof of Theorem~\ref{ThmGenInv}}
\label{ProofThmInv}

Theorem~\ref{ThmGenInv} states that the inverse of $\OPsnok$ from \eqref{Kop2} is equal to the expression \eqref{OPsnokinvRobin}. 
This is shown in Section~\ref{A2}, however,
first, we present some useful relations.

\subsection{Preliminaries}
\label{Preliminaries}

Note that $D_2\ett=0$ and $D_2\xfet=0$, since $D_2$ approximates the second derivative operator 
(these two relations actually hold also for the inconsistent (2,0) order accurate operators in Sections~\ref{Inverse20} and \ref{AppWide}).
Furthermore, 
 $\dselr^\trans$ 
 consistently approximate the first derivative, so that $\dselr^\trans\ett=0$ and $\dselr^\trans\xfet=1$.
 Hence
 \begin{align}\label{Sconsistent}
 \dsel^\trans(\domsize\ett-\xfet)=-1,&&
 \dsel^\trans\xfet=1,&&
 \dser^\trans(\domsize\ett-\xfet)=-1,&&
 \dser^\trans\xfet=1.
 \end{align}
Combining the above relations with $A=-\PH D_2+\er\dser^\trans-\el\dsel^\trans$ from \eqref{SBPprop2},
gives 
 \begin{align}\label{AxA1-x}
 A(\domsize\ett-\xfet)=\el-\er,&&A\xfet=\er-\el.
 \end{align}
Now, we define 
the additional $(\ngpts-1)\times1$-vectors $\one=[1\ 1\ \hdots\ 1]^\trans$ and $\xvec=h[1\ 2\ \hdots\ \ngpts-1]^\trans$ (they are 
 shorter versions of $\ett$ and $\xfet$ in 
 Theorem~\ref{ThmGenInv}).
 With these new variables and with the notation from \eqref{Aparts}, the relations \eqref{AxA1-x} can be expressed as
\begin{align*}
\left[\begin{array}{c} \domsize\Abeg+\Awest^\trans( \domsize\one-\xvec)\\ \domsize\Awest+\Amid(\domsize \one-\xvec)\\ \domsize\Aoff+\Aeast^\trans(\domsize \one-\xvec)\end{array}\right]=\left[\begin{array}{c}1\\\vec{0}\\-1\end{array}\right],&&
\left[\begin{array}{c}\Awest^\trans\xvec+ \domsize\Aoff\\\Amid\xvec+\domsize \Aeast\\\Aeast^\trans\xvec+ \domsize\Aend\end{array}\right]=\left[\begin{array}{c}-1\\\vec{0}\\1\end{array}\right].
\end{align*}
Given that $A$ is correctly constructed, such that $\Amid$ in invertible, this leads to the relations
\begin{align}\label{xuttrycktaiA}
\one-\xvec/\domsize &=-\Amid^{-1}\Awest,&\xvec/\domsize &=-\Amid^{-1}\Aeast
\end{align}and
\begin{align}\label{Bonus}
\Abeg&=\Awest^\trans\Amid^{-1}\Awest+\frac{1}{\domsize},&\Aend&=\Aeast^\trans\Amid^{-1}\Aeast+\frac{1}{\domsize},&\Aoff&=\Aeast^\trans\Amid^{-1}\Awest-\frac{1}{\domsize} =\Awest^\trans\Amid^{-1}\Aeast-\frac{1}{\domsize}.
\end{align}
Now, multiplying $A$ from \eqref{Aparts} by $\GreenDisc$ from \eqref{InverseParts} and using the relations \eqref{xuttrycktaiA}, we get
 \begin{align}\label{AG}
 A\GreenDisc=\left[\begin{array}{ccc}0&\Awest^\trans\Amid^{-1}&0\\0&\Imid&0\\0&\Aeast^\trans\Amid^{-1}&0\end{array}\right]
 =I-\el(\ett-\xfet/\domsize )^\trans-\er\xfet^\trans/\domsize 
 \end{align}where $\Imid$ is the $(\ngpts-1)\times(\ngpts-1)$ identity matrix. 
 From \eqref{InverseParts} we have 
 $
\pickupl=\ett-\xfet/\domsize -\GreenDisc\dsel$ and $\pickupr=\xfet/\domsize +\GreenDisc\dser
$, and
using the relations \eqref{AxA1-x}, \eqref{AG} and \eqref{Sconsistent}, we arrive at
\begin{align}\label{Apickup}
A\pickupl=-\dsel,&&A\pickupr=\dser.
\end{align}
The vectors $\elr$ picks out the first and last elements in the vectors they are multiplied by, such that 
 \begin{align}\label{relelr}
 \begin{aligned}
 \el^\trans(\ett-\xfet/\domsize )=1,&&
 \el^\trans\xfet/\domsize =0,&&
 \er^\trans(\ett-\xfet/\domsize )=0,&&
 \er^\trans\xfet /\domsize=1,\\
\el^\trans\pickupl=1,&&
\el^\trans\pickupr=0, &&
\er^\trans\pickupl=0,&&
\er^\trans\pickupr=1.
 \end{aligned}
 \end{align}
Finally, from \eqref{InverseParts} we have
 \begin{align}\label{Grels}
\el^\trans\GreenDisc=\er^\trans\GreenDisc=\noll^\trans,&&\dsel^\trans \GreenDisc=(\ett-\xfet/\domsize -\pickupl)^\trans,&&\dser^\trans \GreenDisc=(\pickupr-\xfet/\domsize )^\trans.
 \end{align}
We are now ready to prove Theorem~\ref{ThmGenInv}.

\subsection{Confirmation of Equation~\eqref{OPsnokinvRobin} with \eqref{InverseParts},
\eqref{determining} and
\eqref{qsnokdefALT}}
\label{A2}

We multiply $\OPsnok$ in 
\eqref{Kop2}
 by the expression for $\OPsnok^{-1}$ in \eqref{OPsnokinvRobin}, with the aim of showing that $\OPsnok\OPsnok^{-1}=I$ indeed holds. In the first step, \eqref{OPsnokinvRobin} yields 
\begin{align}\label{AsnokAsnokinv}
\OPsnok\OPsnok^{-1}&=\OPsnok\GreenDisc+\underbrace{\OPsnok\left[\begin{array}{c}-\sigmal\pickupl^\trans\\ -\sigmar\pickupr^\trans\\(\ett-\xfet/\domsize )^\trans\\ \xfet^\trans /\domsize\end{array}\right]^{\hspace{-3pt}\trans}\hspace{-2pt}}_{\tmpvar}\hspace{2pt} \detmatris^{-1} \left[\begin{array}{c}\pickupl^\trans\\ \pickupr^\trans\\\bl(\ett-\xfet/\domsize )^\trans\\ \br\xfet^\trans/\domsize \end{array}\right].
\end{align}
We start by looking at the first term in \eqref{AsnokAsnokinv}. First using \eqref{Kop2}, followed by the relations in \eqref{AG} and \eqref{Grels}, and thereafter just rearranging the terms, we arrive at\newpage
\begin{align}\label{AAinv1}\begin{split}
\OPsnok\GreenDisc&=A\GreenDisc-\hspace{-1pt}\left[\hspace{-5pt}\begin{array}{c}\el^\trans\\-\dsel^\trans\end{array}\hspace{ -4pt}\right]^\trans\left[\hspace{ -4pt}\begin{array}{cc}\taul\al&\taul\bl+1\\\sigmal\al&\sigmal\bl\end{array}\hspace{ -4pt}\right]\left[\hspace{-5pt}\begin{array}{c}\el^\trans\\-\dsel^\trans\end{array}\hspace{ -4pt}\right]\GreenDisc\\&\hspace{40pt}-\hspace{-1pt}\left[\hspace{ -4pt}\begin{array}{c}\er^\trans\\\dser^\trans\end{array}\hspace{ -4pt}\right]^\trans\left[\hspace{ -4pt}\begin{array}{cc}\taur\ar&\taur\br+1\\\sigmar\ar&\sigmar\br\end{array}\hspace{ -4pt}\right]\left[\hspace{ -4pt}\begin{array}{c}\er^\trans\\\dser^\trans\end{array}\hspace{ -4pt}\right]\GreenDisc\\
&=I-\el(\ett-\xfet/\domsize)^\trans-\er\xfet^\trans/\domsize-\left[\hspace{-5pt}\begin{array}{c}\el^\trans\\-\dsel^\trans\end{array}\hspace{-3pt}\right]^\trans\left[\hspace{-3pt}\begin{array}{c}\taul\bl+1\\\sigmal\bl\end{array}\hspace{-3pt}\right]\left(\pickupl-\ett+\xfet/\domsize\right)^\trans\\&\hspace{150pt}-\left[\hspace{-3pt}\begin{array}{c}\er^\trans\\\dser^\trans\end{array}\hspace{-3pt}\right]^\trans\left[\hspace{-3pt}\begin{array}{c}\taur\br+1\\\sigmar\br\end{array}\hspace{-3pt}\right]\left(\pickupr-\xfet/\domsize\right)^\trans\\
&=I-\left[\hspace{-5pt}\begin{array}{c}(\taul\bl+1)\el^\trans-\sigmal\bl\dsel^\trans\\(\taur\br+1)\er^\trans+\sigmar\br\dser^\trans\\-\left(\taul\el^\trans-\sigmal\dsel^\trans\right)\\-(\taur\er^\trans+\sigmar\dser^\trans)\end{array}\hspace{-3pt}\right]^\trans\left[\begin{array}{c}\pickupl^\trans\\ \pickupr^\trans\\\bl(\ett-\xfet/\domsize)^\trans\\ \br\xfet^\trans/\domsize\end{array}\right].\end{split}
\end{align}
Next, we look at the part $\tmpvar$ in \eqref{AsnokAsnokinv}.
After rewriting $\OPsnok$ using \eqref{Kop2}, we use the relations in \eqref{Apickup}, \eqref{AxA1-x}, 
\eqref{relelr}, \eqref{Sconsistent} and \eqref{qsnokdefALT}. Thereafter, the resulting terms are rearranged. These steps are shown below in \eqref{AAinv2}.
\begin{align}\label{AAinv2}\begin{split}
\tmpvar
&=A\left[\hspace{-3pt}\begin{array}{c}-\sigmal\pickupl^\trans\\ -\sigmar\pickupr^\trans\\(\ett-\xfet/\domsize)^\trans\\ \xfet^\trans/\domsize\end{array}\hspace{-3pt}\right]^\trans-\left[\hspace{-5pt}\begin{array}{c}\el^\trans\\-\dsel^\trans\end{array}\hspace{-3pt}\right]^\trans\left[\hspace{-3pt}\begin{array}{cc}\taul\al&\taul\bl+1\\\sigmal\al&\sigmal\bl\end{array}\hspace{-3pt}\right]\left[\hspace{-5pt}\begin{array}{c}\el^\trans\\-\dsel^\trans\end{array}\hspace{-3pt}\right]\left[\hspace{-3pt}\begin{array}{c}-\sigmal\pickupl^\trans\\ -\sigmar\pickupr^\trans\\(\ett-\xfet/\domsize)^\trans\\ \xfet^\trans/\domsize\end{array}\hspace{-3pt}\right]^\trans\\&\hspace{90pt}-\left[\hspace{-3pt}\begin{array}{c}\er^\trans\\\dser^\trans\end{array}\hspace{-3pt}\right]^\trans\left[\hspace{-3pt}\begin{array}{cc}\taur\ar&\taur\br+1\\\sigmar\ar&\sigmar\br\end{array}\hspace{-3pt}\right]\left[\hspace{-3pt}\begin{array}{c}\er^\trans\\\dser^\trans\end{array}\hspace{-3pt}\right]\left[\begin{array}{c}-\sigmal\pickupl^\trans\\ -\sigmar\pickupr^\trans\\(\ett-\xfet/\domsize)^\trans\\ \xfet^\trans/\domsize\end{array}\right]^\trans\\
&=\left[\hspace{-3pt}\begin{array}{c}\sigmal\dsel^\trans\\ -\sigmar\dser^\trans\\(\el^\trans-\er^\trans)/\domsize\\ (\er^\trans-\el^\trans)/\domsize\end{array}\hspace{-3pt}\right]^\trans-\left[\hspace{-5pt}\begin{array}{c}\el^\trans\\-\dsel^\trans\end{array}\hspace{-3pt}\right]^\trans\left[\hspace{-3pt}\begin{array}{cc}\taul\al&\taul\bl+1\\\sigmal\al&\sigmal\bl\end{array}\hspace{-3pt}\right]\left[\hspace{-5pt}\begin{array}{cccc}-\sigmal&0&1&0\\-\sigmal\qsnokl&\sigmar\qsnokc&1/\domsize&-1/\domsize\end{array}\hspace{-3pt}\right]\\&\hspace{77pt}-\left[\hspace{-3pt}\begin{array}{c}\er^\trans\\\dser^\trans\end{array}\hspace{-3pt}\right]^\trans\left[\hspace{-3pt}\begin{array}{cc}\taur\ar&\taur\br+1\\\sigmar\ar&\sigmar\br\end{array}\hspace{-3pt}\right]\left[\hspace{-5pt}\begin{array}{cccc}0&-\sigmar&0&1\\\sigmal\qsnokc&-\sigmar\qsnokr&-1/\domsize&1/\domsize\end{array}\hspace{-3pt}\right]\\
&=\left[\hspace{-5pt}\begin{array}{c}(\taul\bl+1)\el^\trans-\sigmal\bl\dsel^\trans\\(\taur\br+1)\er^\trans+\sigmar\br\dser^\trans\\-\taul\el^\trans+\sigmal\dsel^\trans\\-\taur\er^\trans-\sigmar\dser^\trans\end{array}\hspace{-3pt}\right]^\trans\left[\hspace{-5pt}\begin{array}{cccc}\taul+\sigmal\qsnokl&-\sigmar\qsnokc&0&0\\-\sigmal\qsnokc&\taur+\sigmar\qsnokr&0&0\\\duall&0&\al+\frac{\bl}{\domsize}&-\frac{\bl}{\domsize}\\0&\dualr&-\frac{\br}{\domsize}&\ar+\frac{\br}{\domsize}\end{array}\hspace{-3pt}\right],
\end{split}
\end{align}
We note that the last $4\times4$-matrix is nothing but $\detmatris$ from \eqref{determining}.
Inserting the results from \eqref{AAinv1} and \eqref{AAinv2} into \eqref{AsnokAsnokinv} gives us
\begin{align*}
\OPsnok\OPsnok^{-1}&=
I-\left[\hspace{-5pt}\begin{array}{c}(\taul\bl+1)\el^\trans-\sigmal\bl\dsel^\trans\\(\taur\br+1)\er^\trans+\sigmar\br\dser^\trans\\-\taul\el^\trans+\sigmal\dsel^\trans\\-\taur\er^\trans-\sigmar\dser^\trans\end{array}\hspace{-3pt}\right]^\trans\left[\begin{array}{c}\pickupl^\trans\\ \pickupr^\trans\\\bl(\ett-\xfet /\domsize)^\trans\\ \br\xfet^\trans/\domsize \end{array}\right]\\&
+\left[\hspace{-5pt}\begin{array}{c}(\taul\bl+1)\el^\trans-\sigmal\bl\dsel^\trans\\(\taur\br+1)\er^\trans+\sigmar\br\dser^\trans\\-\taul\el^\trans+\sigmal\dsel^\trans\\-\taur\er^\trans-\sigmar\dser^\trans\end{array}\hspace{-3pt}\right]^\trans \detmatris \detmatris^{-1} \left[\begin{array}{c}\pickupl^\trans\\ \pickupr^\trans\\\bl(\ett-\xfet /\domsize)^\trans\\ \br\xfet^\trans/\domsize \end{array}\right]=I,
\end{align*}
concluding the proof.

\section{Proofs of the relations between $\qhattot$, $\app$, $\qtot$ and $\qsnoktot$}

Below we present the proofs of Theorem~\ref{happ=1/qsnok} and 
the
Lemmas~\ref{happ=1/q} and
\ref{qsnoksame}.

\subsection{Proof of Theorem~\ref{happ=1/qsnok}}
\label{happqsnok=1}

We aim to relate $\app$ in \eqref{BorrowProp} to $\qhattot$ in \eqref{qhat}.
Note that the latter quantity relies on that $\qhatl=\qhatr$ in \eqref{qsnokdefALT}. To emphasize this, we introduce $\qhatd=\qhatlr$.

We start by defining 
$\widetilde{\scv}=\scv-\pickupl\arbscall +\pickupr\arbscalr$ with $\pickuplr$ from
\eqref{InverseParts},
and compute
\begin{align}\label{helpsnok}
\widetilde{\scv}^\trans A \widetilde{\scv}
&=\scv^\trans A\scv+2\arbscall\scv^\trans \dsel +2\arbscalr\scv^\trans \dser+\arbscall^2\qhatl +2\arbscall\arbscalr\qhatc +\arbscalr^2\qhatr
\end{align}
using \eqref{Apickup} and \eqref{qsnokdefALT}. The $(\ngpts+1)\times1$-vector $\scv$ is arbitrary and
for the scalars $\arbscallr$ we make the ansatz
$\arbscall=(\mainl\dsel^\trans +\minorr\dser^\trans)\scv$ and $\arbscalr=(\mainr\dser^\trans +\minorl\dsel^\trans)\scv$ where $\minorlr$ and $\mainlr$ are scalars yet to be determined. Inserted into \eqref{helpsnok}, this yields
\begin{align}\label{insattansatz}
\widetilde{\scv}^\trans A \widetilde{\scv}=\scv^\trans A\scv+\scv^\trans(\zl \dsel\dsel^\trans +2\zc\dsel\dser^\trans +\zr \dser\dser^\trans )\scv 
\end{align}
 where we have defined 
\begin{align}\label{z0Nc}\begin{split}
\zl&=2\mainl+2\qhatc \mainl\minorl+\qhatl\mainl^2+\qhatr\minorl^2\\
\zr&=2\mainr+2\qhatc \mainr\minorr+\qhatr\mainr^2+\qhatl\minorr^2\\
\zc&=\minorl+\minorr+\qhatl\mainl\minorr+\qhatr\mainr\minorl+\qhatc \mainl\mainr+\qhatc \minorl\minorr.\end{split}
\end{align}
Using the "borrowing technique", $\app$ is the maximum value such that $\Atilde\geq0$ still holds, referring to $\app$ and $\Atilde$ from \eqref{BorrowProp}.
For \eqref{insattansatz} to correspond to \eqref{BorrowProp}, we need $\zl=\zr$ 
and $\zc=0$, and under these constraints we must mimimize $\zlr$.
To get there, we first define
$\xl =\mainl+\minorl$, $\yl =\mainl-\minorl$, $\xr =\mainr+\minorr$ and $\yr =\mainr-\minorr$.
Now
\begin{align*}
\xl +\yl &=2\mainl,&
\xl ^2-\yl ^2&=4\mainl\minorl,&
\xl ^2+\yl ^2&=2(\mainl^2+\minorl^2),\\
\xr +\yr &=2\mainr,&
\xr ^2-\yr ^2&=4\mainr\minorr,&
\xr ^2+\yr ^2&=2(\mainr^2+\minorr^2).
\end{align*}
Inserted into $\zl$ and $\zr$ in \eqref{z0Nc}, these relations gives us 
\begin{align*}
\zlr&=\xlr +\ylr +\qhatc \frac{\xlr ^2-\ylr ^2}{2}+\qhatd \frac{\xlr ^2+\ylr ^2}{2}\\
&=\frac{\qhatd +\qhatc }{2}\left(\xlr +\frac{1}{\qhatd +\qhatc }\right)^2+\frac{\qhatd -\qhatc }{2}\left(\ylr +\frac{1}{\qhatd -\qhatc }\right)^2-\frac{\qhatd }{\qhatd ^2-\qhatc ^2}
\end{align*}
where we have used that $\qhatd=\qhatl=\qhatr$.
Note that for fixed values of $\zl$ and $\zr$, the pairs 
$(\xl , \yl )$ and $(\xr , \yr )$
describe ellipses. 
Reformulated in a parametric form, they are
\begin{align}\label{xochydiffr}
\begin{split}
\xl &=\frac{-1}{\qhatd +\qhatc }+\sqrt{\frac{2}{\qhatd +\qhatc }}\ \rl\cos(\theta_\indexl),\hspace{17pt}\yl =\frac{-1}{\qhatd -\qhatc }+\sqrt{\frac{2}{\qhatd -\qhatc }}\ \rl\sin(\theta_\indexl),\\
\xr &=\frac{-1}{\qhatd +\qhatc }+\sqrt{\frac{2}{\qhatd +\qhatc }}\ \rr\cos(\theta_\indexr),\hspace{15pt}\yr =\frac{-1}{\qhatd -\qhatc }+\sqrt{\frac{2}{\qhatd -\qhatc }}\ \rr\sin(\theta_\indexr),\end{split}
\end{align}
where
$\rl^2=\zl+\qhatd /(\qhatd ^2-\qhatc ^2)$ and $\rr^2=\zr+\qhatd /(\qhatd ^2-\qhatc ^2)$.
To enforce $\zl=\zr$, we simply let $\rl=\rr=r$.
This gives us
\begin{align}\label{zlrsamma}
\zlr
&=r^2-\frac{\qhatd }{\qhatd ^2-\qhatc ^2}.
\end{align}
Next, we need to fulfull the requirement $\zc=0$. Inserting the relations
\begin{align*}
\minorlr=\frac{\xlr -\ylr }{2},&&
\mainl\minorr+\minorl\mainr=\frac{\xl \xr-\yl \yr }{2},&&
\mainl\mainr+\minorl\minorr=\frac{\xl \xr+\yl \yr }{2}
\end{align*}
 into $\zc$ in \eqref{z0Nc}, and thereafter using \eqref{xochydiffr} with $\rlr=r$,
leads to
\begin{align*}
2\zc&=\xl -\yl +\xr -\yr +\qhatd (\xl \xr -\yl \yr )+\qhatc (\xl \xr +\yl \yr )\\
&=2\left(\frac{\qhatc }{\qhatd ^2-\qhatc ^2}+r^2\cos(\theta_\indexl+\theta_\indexr)\right).
\end{align*}
Now, we want $\zc=0$ while keeping $r^2$ to a minimum (in order to in turn minimize $\zlr$). We achieve this by putting
\begin{align*}
r^2=\frac{|\qhatc |}{\qhatd ^2-\qhatc ^2},&&\cos(\theta_\indexl+\theta_\indexr)=-\text{sgn}(\qhatc ).
\end{align*}
It can be shown that $\qhatd ^2-\qhatc ^2\geq0$ (by inserting \eqref{Apickup} into \eqref{qsnokdefALT} and using that $A^\trans=A\geq0$), therefore the absolute value is only needed for $\qhatc$.
Inserting the above choice of $r^2$ 
 into $\zlr$ in \eqref{zlrsamma} and thereafter using \eqref{qhat} with $\qhatlr=\qhatd$, we obtain
\begin{align*}
\zlr
=\frac{|\qhatc |-\qhatd }{\qhatd ^2-\qhatc ^2}
&=\frac{-1}{\qhatd +|\qhatc |}=-\frac{1}{\qhattot}.
\end{align*}
We have thereby shown that, with $\zc=0$ and $\zl=\zr$ in \eqref{insattansatz}, $1/\qhattot$ is the maximum amount of "positivity" in form of $( \dsel\dsel^\trans+ \dser\dser^\trans)$ we can extract from $A$. Inserting $\zc=0$ and $\zlr=-1/\qhattot$ into
\eqref{insattansatz} and noting that $\widetilde{\scv}^\trans A \widetilde{\scv}\geq0$, we get
\begin{align}\label{happqhat}
\scv^\trans A\scv-\frac{1}{\qhattot}\scv^\trans( \dsel\dsel^\trans + \dser\dser^\trans )\scv \geq0.
\end{align}
Comparing with \eqref{BorrowProp}, we deduce that 
$h\app=1/\qhattot$.

\subsection{Proof of Lemma~\ref{happ=1/q}}
\label{happq=1}

We define $\helpvar=S\scv+M^{-1}\el\arbscall +M^{-1}\er\arbscalr$ and use the relations in \eqref{decompAS} to compute
\begin{align}\label{wTMw}
\helpvar^\trans M\helpvar&=\scv^\trans A\scv+2\arbscall\scv^\trans \dsel +2\arbscalr\scv^\trans \dser +\arbscall^2\ql +2 \arbscall \arbscalr\qc+\arbscalr^2\qr 
\end{align}
where $\qall$ are defined in \eqref{qdefcompact} and where $\arbscallr$ are any scalars. 
It is assumed that $M>0$ and that $\helpvar^\trans M\helpvar\geq0$.
Note that the right-hand-side of \eqref{wTMw} has the
same form as \eqref{helpsnok}, but with $\qhatall$ replaced by $\qall$. Thus, by following the same procedure, 
we obtain the relation corresponding to \eqref{happqhat}, namely
\begin{align*}
\scv^\trans A\scv&-\frac{1}{\qtot}\scv^\trans\left( \dsel\dsel^\trans+ \dser\dser^\trans\right) \scv \geq0
\end{align*}
with $\qtot$ defined in \eqref{qdefcompact}.
Comparing with \eqref{BorrowProp} we see that $h\app=1/\qtot$.

\subsection{Proof of Lemma~\ref{qsnoksame}}
\label{partII}

In \cite{ErikssonDual}, it was shown that $\qsnoklr$ and $\qsnokc$ in \eqref{qsnokdef} can be computed
as
\begin{align}\label{eqqsnok}
\qsnokl=\dsel^\trans K_0\dsel,&&\qsnokr=\dser^\trans K_0\dser,&&\qsnokc=\dsel^\trans K_0\dser=\dser^\trans K_0\dsel,
\end{align}
with $K_0$ defined (using our notation from \eqref{Aparts}) as 
\begin{align*}
K_0=\left[\begin{array}{cc}0&\begin{array}{cc}0\hspace{10pt}&0\hspace{10pt}\end{array}\\\begin{array}{c}0\\0\end{array}&\left[\begin{array}{cc}\Amid&\Aeast\\\Aeast^\trans&\Aend\end{array}\right]^{-1}\end{array}\right].\end{align*}
Now, we want to show that the quantities in \eqref{eqqsnok} are equal to the ones in \eqref{qsnokdefALT}. 
%
Applying the formula for inverses of block matrices to the above definition of $K_0$, and thereafter using the relation for $\Aend$ in \eqref{Bonus}, we obtain\begin{align}\label{K0}K_0&=\frac{1}{\Aend-\Aeast^\trans\Amid^{-1}\Aeast}\left[\begin{array}{ccc}0&0&0\\0&(\Aend-\Aeast^\trans\Amid^{-1}\Aeast)\Amid^{-1}+\Amid^{-1}\Aeast\Aeast^\trans\Amid^{-1}&-\Amid^{-1}\Aeast\\0&-\Aeast^\trans\Amid^{-1}&1\end{array}\right]\notag\\& =\left[\begin{array}{ccc}0&0&0\\0&\Amid^{-1}&0\\0&0&0\end{array}\right]+\domsize\left[\begin{array}{c}0\\-\Amid^{-1}\Aeast\\1\end{array}\right]\left[\begin{array}{ccc}0&-\Aeast^\trans\Amid^{-1}&1\end{array}\right].\end{align}
Comparing \eqref{K0} with \eqref{InverseParts} and
\eqref{xuttrycktaiA},
we note that $K_0=\GreenDisc+\xfet\xfet^\trans/\domsize$. Inserting this 
 into \eqref{eqqsnok}, and thereafter using \eqref{Grels} and that $\dselr^\trans\ett=0$ and $\dselr^\trans\xfet=1$, yields
\begin{align*}
\qsnokl= -\pickupl^\trans\dsel,&&
\qsnokr=\pickupr^\trans\dser,&&\qsnokc= -\pickupl^\trans\dser= \pickupr^\trans\dsel,
\end{align*}
that is exactly the same relations as in \eqref{qsnokdefALT}.

\section{Explicit inverses of the second derivative operator}
\label{InversesExamples}

We provide the explicit expressions of $\Amid^{-1}$, $\pickuplr$, $\qhatlr$ and $\qhatc$
 for the (2,0), (2,1) and (4,2) order accurate narrow-stencil operators and the (2,0) order accurate wide-stencil operator.
By the notation "(2,0) order accurate operator", we refer to a matrix $D_2$ which has order 2 in the interior finite difference stencil and order 0 at the boundaries.

\subsection{The narrow-stencil (2,0) order operator}
\label{Inverse20}

The simplest possible example of a second derivative operator $D_2$ fulfilling the SBP-properties in \eqref{SBPprop2} is
the 
narrow-stencil (2,0)
order operator, and its corresponding matrix $\OPsnok$ was inverted
already in \cite{Eriksson20092659} for the special case $\alr=1$, $\blr=0$ and $\sigmalr=0$. 
It is given below, together with its associated 
$\dselr$ vectors.
\begin{align}\label{D2S20}
D_2=\frac{1}{h^2}\left[\begin{array}{cccccc}0&0\\1&-2&1\\&1&-2&1\\&&\ddots&\ddots&\ddots\\&&&1&-2&1\\&&&&0&0\end{array}\right],
&&
\dsel=\frac{1}{h}\left[\begin{array}{c}-1\\1\\0\\\vdots\\0\\0\end{array}\right].
&&
\dser=\frac{1}{h}\left[\begin{array}{c}0\\0\\\vdots\\0\\-1\\1\end{array}\right].
\end{align}
The operator $D_2$ is also associated with 
$\PH =h\ \text{diag}\left(\frac{1}{2}, 1, 1, \hdots, 1, 1, \frac{1}{2}\right)$,
and using \eqref{SBPprop2} we obtain the $(\ngpts+1)\times(\ngpts+1)$ matrix $A$ given below. 
The $(\ngpts-1)\times(\ngpts-1)$ matrix $\Amid$ is identified using \eqref{Aparts}.
Gauss--Jordan
elimination then leads to $\Amid^{-1}$ as
\begin{align*}
A\hspace{-1pt}
=\hspace{-1pt}\frac{1}{h}\hspace{-2pt}\left[\hspace{-3pt}\begin{array}{cccccc}1&-1\\-1&2&-1\\&-1&2&-1\\&&\ddots&\ddots&\ddots\\&&&-1&2&-1\\&&&&-1&1\end{array}\hspace{-3pt}\right],&&
\Amid^{-1}
\hspace{-1pt}=\hspace{-1pt}
h\hspace{-2pt}\left[\hspace{-3pt}\begin{array}{cccc}
 1-\frac{1}{\ngpts}& 1-\frac{2}{\ngpts}&\cdots&\frac{1}{\ngpts}\\
 1-\frac{2}{\ngpts}&2( 1-\frac{2}{\ngpts})&\cdots&\frac{2}{\ngpts}\\
\vdots&\vdots&\ddots&\vdots\\
\frac{1}{\ngpts}&\frac{2}{\ngpts}&\cdots& 1-\frac{1}{\ngpts}
\end{array}\hspace{-3pt}\right]\hspace{-2pt}.
\end{align*}
Inserting $\Amid^{-1}$ from above into \eqref{InverseParts}, 
and using that $x_i=ih$,
yields
\begin{align}\label{Ksnokinv2int}
\left(\GreenDisc\right)_{i,j}=\left\{\begin{array}{ll}
x_j(1-x_i/\domsize),&0\leq j\leq i\leq \ngpts,\\
x_i(1-x_j/\domsize),& 0\leq i \leq j\leq \ngpts.
\end{array}\right.
\end{align}
Note the 
striking 
similarity 
to
 the continuous 
Green's function in
Remark~\ref{RemGreen2}. 
Next, by noticing the structure of $\dselr$ in \eqref{D2S20} and identifying the first and last columns of $\Amid^{-1}$ as $h(\one-\xvec/\domsize)$ and $h\xvec/\domsize$ we can compute $\GreenDisc\dselr$ and consequently $\pickuplr$ in \eqref{InverseParts} as
 \begin{align*}
 \GreenDisc\dsel=\left[\begin{array}{c}0\\\one-\xvec/\domsize\\0\end{array}\right], &&\GreenDisc\dser=-\left[\begin{array}{c}0\\\xvec/\domsize\\0\end{array}\right],&&
\pickupl=\el,&&\pickupr=\er.
\end{align*} 
Furthermore, inserting these $\pickuplr$ and $\dselr$ from \eqref{D2S20} into
\eqref{qsnokdefALT}, we obtain
 \begin{align*}
\qhatl =\qhatr =1/h,&& \qhatc =0.
 \end{align*}

\subsection{The narrow-stencil (2,1) order operator}
\label{Sec21}

The narrow-stencil (2,1) order operator (see Section C.1 in \cite{Mattsson2004503}), 
have
 the same matrices $\PH$ and $A$ as the (2,0) order operator, and hence its $\GreenDisc$ is 
 given by \eqref{Ksnokinv2int}. However, the difference matrices $\dselr$ differ, for the (2,1) order operator they are
\begin{align*}
\dsel^\trans=\frac{1}{h}\left[\begin{array}{ccccccc}-\frac{3}{2}&2&-\frac{1}{2}&0&0&\cdots&0\end{array}\right],&&
\dser^\trans=\frac{1}{h}\left[\begin{array}{ccccccc}0&\cdots&0&0&\frac{1}{2}&-2&\frac{3}{2}\end{array}\right].
\end{align*}
We can compute
$\GreenDisc\dsel$ as
\begin{align*}
\GreenDisc\dsel=h\left[\begin{array}{cccccc}0& 0&0&\cdots&0&0\\0& 1-\frac{1}{\ngpts}& 1-\frac{2}{\ngpts}&\cdots&\frac{1}{\ngpts}&0\\0& 1-\frac{2}{\ngpts}&2( 1-\frac{2}{\ngpts})&\cdots&\frac{2}{\ngpts}&0\\\vdots&\vdots&\vdots&\ddots&\vdots&\vdots\\0&\frac{1}{\ngpts}&\frac{2}{\ngpts}&\cdots& 1-\frac{1}{\ngpts}&0\\0& 0&0&\cdots&0&0\end{array}\right]\frac{1}{h}\left[\begin{array}{c}-\frac{3}{2}\\2\\-\frac{1}{2}\\0\\\vdots\\0\end{array}\right]
=\left[\begin{array}{c}0\\\frac{3}{2}-\frac{1}{\ngpts}\\1-\frac{2}{\ngpts}\\\vdots\\\frac{1}{\ngpts}\\0\end{array}\right]
\end{align*}
and repeating the procedure for $\GreenDisc\dser$ and thereafter using \eqref{InverseParts}, we arrive at
\begin{align*}
\pickupl=\left[\begin{array}{cccccc}1&-\frac{1}{2}&0&\cdots&0&0\end{array}\right]^\trans,&&\pickupr=\left[\begin{array}{cccccc}0&0&\cdots&0&-\frac{1}{2}&1\end{array}\right]^\trans.\end{align*}
Finally, we use \eqref{qsnokdefALT} to compute 
\begin{align*}
\qhatlr
=2.5/h,&&\qhatc
=0,
\end{align*}
where $\qhatc
=0$ holds for $\ngpts\geq4$.

\subsection{The narrow-stencil (4,2) order operator}

The operator $D_2$ with fourth order interior accuracy and diagonal norm $\PH$, see Section C.2 in \cite{Mattsson2004503}, 
is associated with the
difference operators
\begin{align}\label{S42}
\dsel^\trans=\frac{1}{h}\left[\hspace{-2pt}\begin{array}{ccccccc}\frac{-11}{6}&3&\frac{-3}{2}&\frac{1}{3}&0&\cdots&0\\\end{array}\hspace{-2pt}\right],&&\dser^\trans=\frac{1}{h}\left[\hspace{-2pt}\begin{array}{ccccccc}0&\cdots&0&\frac{-1}{3}&\frac{3}{2}&-3&\frac{11}{6}\end{array}\hspace{-2pt}\right].
\end{align}
Using 
\eqref{SBPprop2} 
and identifying the interior of $A$ according to \eqref{Aparts}, we obtain 
\begin{align*}\Amid=\frac{1}{h}\left[\begin{array}{ccccccccc} \frac{59}{24}& -\frac{59}{48}&0& & & && \\ -\frac{59}{48}& \frac{55}{24}& -\frac{59}{48}& \frac{1}{12}& & &&& \\0& -\frac{59}{48}& \frac{59}{24}& -\frac{4}{3}& \frac{1}{12}& &&& \\& \frac{1}{12}& -\frac{4}{3}& \frac{5}{2}& -\frac{4}{3}& \frac{1}{12}&&& \\&& \ddots& \ddots& \ddots& \ddots& \ddots&& \\&&& \frac{1}{12}& -\frac{4}{3}& \frac{5}{2}& -\frac{4}{3}& \frac{1}{12}& \\&&& & \frac{1}{12}& -\frac{4}{3}& \frac{59}{24}& -\frac{59}{48}&0\\&&& & & \frac{1}{12}& -\frac{59}{48}& \frac{55}{24}& -\frac{59}{48}\\&&& & & &0& -\frac{59}{48}& \frac{59}{24}\end{array}\right].\end{align*}
We are now looking for a matrix $\Gmid$ such that
 $\Gmid=\Amid^{-1}$, and 
 make the ansatz
\begin{align*}\Gmid
 &=\left[\begin{array}{ccccc} \Opinv_{1}&\Opinv_{2}&\hdots&\Opinv_{\ngpts-1} \end{array}\right]
 ,&& \Opinv_{j}=\left[\begin{array}{ccccc} \opinv_{1,j}&\opinv_{2,j}&\hdots&\opinv_{\ngpts-1,j} \end{array}\right]^\trans.
\end{align*}
For $\Amid\Gmid=\Imid$ to hold, 
$\Amid\Opinv_j=\evec_j$ must be fulfilled for all $j=1,2,\hdots,\ngpts-1$, 
where the 
vector $\evec_j=[0\ \hdots\ 0\ 1\ 0\ \hdots\ 0]^\trans$ is non-zero only in its $j$th element.
From the mid rows of $\Amid\Opinv_j$, given the inner structure of $\Amid$, we thus need 
\begin{align*}
\opinv_{i-2,j} -16\opinv_{i-1,j}+ 30\opinv_{i,j} -16\opinv_{i+1,j}+\opinv_{i+2,j}=12h\delta_{i,j},&&\begin{array}{l}\forall i=4,5,\hdots, \ngpts-4,\\\forall j=1,2,\hdots,\ngpts-1,\end{array} 
\end{align*}
where $\delta_{i,j}$ is the Kronecker delta. 
Hence, the fourth order linear homogeneous recurrence relation $\opinv_{i-2,j} -16\opinv_{i-1,j}+ 30\opinv_{i,j} -16\opinv_{i+1,j}+\opinv_{i+2,j}=0$ has to be fulfilled by almost all $\opinv_{i,j}$. 
The explicit solution to this recursive relation has the form 
$\opinv_{i,j}=\cc{j}+\ck{j}i+\ct{j}\theR^i+\cf{j}\theR^{-i}$,
where $\theR=7+\sqrt{48}\approx 13.9$ and where $\call$ are $j$-dependent constants.
To be precise, $\opinv_{i,j}$ has this form for $2\leq i\leq \ngpts-2$, and
 we need two versions of the $j$-dependent constants, that is $\opinv_{i,j}=\ucc{j}+\uck{j} i+\uct{j}\theR^i+\ucf{j}\theR^{-i}$ for $2\leq i\leq j$ and $\opinv_{i,j}=\lcc{j}+\lck{j} i+\lct{j}\theR^i+\lcf{j}\theR^{-i}$ for $j\leq i\leq \ngpts-2$. 
For each $j=2,3,\hdots,\ngpts-2$, we thus have eight unknown constants $\callu$ and $\calll$, as well as the two remaining unknowns $ \opinv_{1,j} $ and $ \opinv_{\ngpts-1,j}$. These are determined by the
 three first and the three last rows in 
the requirement $\Amid\Opinv_j=\evec_j$, which 
gives us six conditions. From the rows $i=j-1,j,j+1$, we get three more conditions and in addition, we demand that the two versions of $\opinv_{j,j}$ are identical. Altogether, this leads to a $10\times10$ system of equations which we solve using Gauss--Jordan
elimination. 
 The boundary columns $j=1$ and $j=\ngpts-1$ must be treated separately, in a similar manner. 
 All in all, these steps lead to the elements of the inverse $(\Amid^{-1})_{i,j}=\opinv_{i,j}$ as
 \begin{align*}
 (\Amid^{-1})_{i,j}
 &=\corrgreen_{i,j}+\left\{\begin{array}{ll}
 x_j(1-x_ i/\domsize),&1\leq j\leq i\leq \ngpts-1\\x_i(1- x_j/\domsize ),&1\leq i\leq j\leq \ngpts-1,\end{array}\right.
 \end{align*}
which is thus similar to the second order version of $\Amid^{-1}$, 
plus
an additional term 
$\corrgreen_{i,j}$.
 This additional correction term is, for $2\leq i,j\leq \ngpts-2$, given by
 \begin{align*}
 \corrgreen_{i,j}&=\left\{\begin{array}{ll} -h\frac{\scalebox{1}{$\nyserie_{j}\nyserie_{\ngpts-i} $}}{\scalebox{1}{$\nydetskalning_\ngpts$}},&2\leq j\leq i\leq \ngpts-2,\\\vspace{-8pt}\\
 -h\frac{\scalebox{1}{$\nyserie_{i}\nyserie_{\ngpts-j}$}}{\scalebox{1}{$\nydetskalning_\ngpts$}},&2\leq i\leq j\leq \ngpts-2,\end{array}\right.
 \end{align*}
 where 
\begin{align*}
\nyserie_i=\frac{(51-2\theR^{-1})\theR^{i-2}-(51-2\theR)\theR^{2-i}}{\theR-\theR^{-1}},&&\nydetskalning_\ngpts=\frac{\theR^{\ngpts-4}(2\theR^{-1}-51 )^2-\theR^{4-\ngpts}(2\theR-51)^2}{\theR-\theR^{-1}}.
\end{align*}
Note that 
$\nydetskalning_\ngpts\neq0$ (unless $\ngpts\approx 3.7$), so there is no risk of division by zero. 
Moreover, for $i,j=1$ or $i,j=\ngpts-1$ we have
 \begin{align*}
\corrgreen_{1,j} 
 &=-h \frac{\nyserie_{\ngpts-j} }{\nydetskalning_\ngpts},&
\corrgreen_{\ngpts-1,j}
 &=-h \frac{\nyserie_{j} }{\nydetskalning_\ngpts},&&2\leq j\leq \ngpts-2, 
\\
\corrgreen_{i,1}&=-h\frac{\nyserie_{\ngpts-i}}{\nydetskalning_\ngpts},&
\corrgreen_{i,\ngpts-1}&=-h\frac{\nyserie_{i}}{\nydetskalning_\ngpts},&&2\leq i\leq \ngpts-2,
\end{align*}
and 
\begin{align*}
\corrgreen_{1,1}=\corrgreen_{\ngpts-1,\ngpts-1}&=-h\frac{\nyserie_{\ngpts-2} }{2\nydetskalning_\ngpts}-h \frac{11}{118},&
\corrgreen_{1,\ngpts-1}=\corrgreen_{\ngpts-1,1}&=-h\frac{\nyserie_2}{2\nydetskalning_\ngpts}.
\end{align*}
From \eqref{InverseParts} we have that the interior of $\GreenDisc$ is given by $\Amid^{-1}$ described above.
Next, we 
use $\dsel$ from \eqref{S42} to compute $\GreenDisc\dsel$ and thereafter 
 \eqref{InverseParts} again, 
to compute
$\pickupl$ as
\begin{align}\label{pickup42}
(\pickupl)_i=\left\{\begin{array}{cl}1&i=0\\
-\frac{85}{118}+\frac{17}{2}\frac{\nyserie_{\ngpts-2}}{\nydetskalning_\ngpts}&i=1\\17\frac{\nyserie_{\ngpts-i} }{\nydetskalning_\ngpts}&i=2,3,\hdots,\ngpts-2,\\
 \frac{17 }{\nydetskalning_\ngpts}&i=\ngpts-1 \\0&i=\ngpts\end{array}\right.
 &&
 \lim_{\ngpts\to\infty}\pickupl=
 \left[\begin{array}{c}1\\
-0.5532\hdots\hspace{8pt}\\
0.3342\hdots\\
0.0239\hdots\\
\vdots\\
0\end{array}\hspace{-3pt}\right],
 \end{align}
where we have used that $\nydetskalning_\ngpts+2\nyserie_{\ngpts-3}=51\nyserie_{\ngpts-2}$. 
Then, $\pickupr$ is given by $(\pickupr)_i=(\pickupl)_{\ngpts-i}$.
 We also compute
 the scalars from \eqref{qsnokdefALT}, as
\begin{align*}
\qhatl=\qhatr=\frac{1}{h}\left(\frac{2417}{354}-\frac{17^2\nyserie_{\ngpts-2}}{2\nydetskalning_\ngpts}\right),
&&\qhatc=\frac{1}{h} \frac{ 17^2}{\nydetskalning_\ngpts}.
\end{align*}
Evaluating 
$h\qhatlr$ and $h\qhatc$ explicitly for some values of $\ngpts$, see Table~\ref{QRtab1}, we
 see that these numbers corresponds exactly (to machine precision) to $\qsnokl h$ and $\qsnokc h$ tabulated in \cite{ErikssonDual}. 
This 
serves as a numerical verification of 
Lemma~\ref{qsnoksame} and indirectly of
Theorem~\ref{ThmGenInv}.

\begin{table}[h]\centering$
\begin{array}{|r|ll|}\hline
\ngpts&h\qhatlr&h\qhatc\\\hline
8& 3.986350339808304& 0.000041141179445\\
 9& 3.986350339313381& 0.000002953803786\\
 10& 3.986350339310831& 0.000000212073570\\
 11& 3.986350339310817& 0.000000015226197\\
 12& 3.986350339310817& 0.000000001093192\\
\hline
\end{array}$
\caption{ The parameters $h\qhatlr$ and $h\qhatc$ 
 in the (4,2) order case
evaluated explicitly.}
\label{QRtab1}
\end{table}

\subsection{The wide-stencil (2,0) order operator}
\label{AppWide}

The wide-stencil (2,0) order accurate operator $D_2$, which is obtained by squaring the (2,1) order accurate operator $D_1$ from \eqref{D1Qsnok}, is given below together with $\dselr=D_1^\trans\elr$
\begin{align*}
D_2=\frac{1}{h^2}
\left[\begin{array}{ccccccc}\frac{1}{2}& -1& \frac{1}{2}\\
\frac{1}{2}& -\frac{3}{4}& 0& \frac{1}{4}& \\
\frac{1}{4} &0& -\frac{1}{2}& 0& \frac{1}{4}\\
&\ddots&\ddots&\ddots&\ddots&\ddots\\
&&\frac{1}{4} &0& -\frac{1}{2}& 0& \frac{1}{4}\\
&&&\frac{1}{4} &0& -\frac{3}{4}& \frac{1}{2}\\
&&&&\frac{1}{2} &-1& \frac{1}{2}\end{array}\right],
&&
\dsel=\frac{1}{h}\left[\begin{array}{c}-1\\1\\0\\\vdots\\0\\0\\0\end{array}\right],
&&
\dser=\frac{1}{h}\left[\begin{array}{c}0\\0\\0\\\vdots\\0\\-1\\1\end{array}\right].
\end{align*}
The operator is also associated with the same
$\PH =h\ \text{diag}\left(\begin{array}{ccccccc}\frac{1}{2},& 1,& 1,& \hdots,& 1,& 1,& \frac{1}{2}\end{array}\right)$ as the other operators with second order accuracy,
and from this we can compute the $(\ngpts+1)\times(\ngpts+1)$ matrix $A$. Identifying the parts of $A$ according to \eqref{Aparts}, gives us the $(\ngpts-1)\times(\ngpts-1)$ matrix $\Amid$. The inverse of this matrix $\Amid$ is
\begin{align*}
\Amid^{-1}
=
2h\left[\begin{array}{cccc}
1-\frac{1}{\ngpts}&0&1-\frac{3}{\ngpts}&\cdots\\
0&2(1-\frac{2}{\ngpts})&0&\cdots\\
1-\frac{3}{\ngpts}&0&3(1-\frac{3}{\ngpts})&\cdots\\
\vdots&\vdots&\vdots&\ddots
\end{array}\right],
\end{align*}
that is the discrete Green's function in \eqref{InverseParts} becomes
\begin{align*}
\left(\GreenDisc\right)_{i,j}=\left\{\begin{array}{ll}
x_j(1-x_i/\domsize)(1+(-1)^{i+j}),&0\leq j\leq i\leq \ngpts,\\
x_i(1-x_j/\domsize)(1+(-1)^{i+j}),& 0\leq i \leq j\leq \ngpts.
\end{array}\right.
\end{align*}
Thus the discrete Green's function produced by the wide operator oscillate, jumping between 0 and 2 times the exact value. 
Next, 
using \eqref{InverseParts} we obtain the vectors
\begin{align*}
\pickupl^\trans&=\left[\begin{array}{cccccccccc}1&-(1-\frac{1}{\ngpts})&1-\frac{2}{\ngpts}&-(1-\frac{3}{\ngpts})&\hdots&(-1)^\ngpts\frac{2}{\ngpts}&-(-1)^\ngpts \frac{1}{\ngpts}&0\end{array}\right],\\
\pickupr^\trans&=\left[\begin{array}{cccccccccc}0&-(-1)^\ngpts \frac{1}{\ngpts}&(-1)^\ngpts \frac{2}{\ngpts}&\hdots&-(1-\frac{3}{\ngpts})&1-\frac{2}{\ngpts}&-(1-\frac{1}{\ngpts})&1\end{array}\right].
\end{align*}
Last,
we compute the (2,0) order wide-stencil version of \eqref{qsnokdefALT}, as
\begin{align*}
\qhatl=\qhatr
=\frac{2}{h}-1/\domsize,&
&\qhatc=-(-1)^{\ngpts}/\domsize.
\end{align*}
In the wide-stencil case, $\qlr=\elr^\trans\PH^{-1}\elr=2/h$ and $\qc=\elr^\trans\PH^{-1}\erl=0$ can be computed directly. We recall that $\qsnokall=\qhatall$ and note that 
$\qsnoklr\neq\qlr$ and $\qsnokc\neq\qc$, but still $\qsnoktot=\qtot=2/h$. 
Compare with the discussion in Section~\ref{relationsErikssonDual}.

\bibliographystyle{plain} 
\bibliography{Acc} %


\end{document}